\newcommand{\ud}{\mathrm{d}}
\newcommand{\ii}{\mathrm{i}}
\newcommand{\cH}{\mathcal{H}}
\newcommand{\ran}{\mathrm{ran}}
\theoremstyle{plain}
\newtheorem{theorem}{Theorem}[section]
\newtheorem{lemma}[theorem]{Lemma}
\newtheorem{corollary}[theorem]{Corollary}
\newtheorem{proposition}[theorem]{Proposition}
\theoremstyle{definition}
\newtheorem{remark}[theorem]{Remark}
\newtheorem{example}[theorem]{Example}
\numberwithin{equation}{section}
\begin{document}

\title[New Ess-Spec by self-adjoint extension of gapped operators]
{On creating new essential spectrum by self-adjoint extension of gapped operators}
% \author[N.~A.~Caruso]{No\`e Angelo Caruso}
% \address[N.~A.~Caruso]{Mathematical Institute, Silesian University in Opava (Czech Republic)}
% \email{noe.caruso@math.slu.cz}
\author[A.~Michelangeli]{Alessandro Michelangeli}
\address[A.~Michelangeli]{Department of Mathematics and Natural Sciences, Prince Mohammad Bin Fahd University \\ Al Khobar 31952 (Saudi Arabia) \\
and Hausdorff Center for Mathematics, University of Bonn \\ Endenicher Allee 60 \\ D-53115 Bonn (Germany)\\ and TQT Trieste Institute for Theoretical Quantum Technologies, Trieste (Italy)}
\email{amichelangeli@pmu.edu.sa}
% \author[A.~Ottolini]{Andrea Ottolini}
% \address[A.~Ottolini]{Department of Mathematics, University of Washington \\ C-138 Padelford \\
% Seattle, WA 98195-4350 (USA)}
% \email{ottolini@uw.edu}

%\dedicatory{}

\begin{abstract}
 Given a densely defined and gapped symmetric operator with infinite deficiency index, it is shown how self-adjoint extensions admitting arbitrarily prescribed portions of the gap as essential spectrum are identified and constructed within a general extension scheme. The emergence of new spectrum in the gap by self-adjoint extension is a problem with a long history and recent deep understanding, and yet it remains topical in several recent applications. Whereas it is already an established fact that, in case of infinite deficiency index, any kind of spectrum inside the gap can be generated by a suitable self-adjoint extension, the present discussion has the virtue of showing the clean and simple operator-theoretic mechanism of emergence of such extensions. 
 \end{abstract}

\date{\today}

\subjclass[2010]{46C99, 47A10, 47B25, 47N20, 47N50}

% 46C99 Hilbert spaces - None of the above, but in this section
% 47A10 Spectrum, resolvent
% 47B02 Operators on Hilbert spaces (general)
% 47B25 Linear symmetric and selfadjoint operators (un-
% bounded)
% 47N20 Applications of operator theory to differential and integral equations
% 47N50 Applications of operator theory in the physical
% sciences

\keywords{self-adjoint operators on Hilbert space; Kre{\u\i}n-Vi\v{s}ik-Birman extension theory; point spectrum; essential spectrum, Kre{\u\i}n-von Neumann extension}

\thanks{Partially supported by the Italian National Institute for Higher Mathematics INdAM, the Alexander von Humboldt Foundation, Bonn, and Prince Mohammed Bin Fahd University, Al Khobar. This note is strongly inspired by the very clean and instructive presentations on closely related subjects delivered by M.~Holzmann and by K.~Pankrashkin at the workshop \emph{Spectral Theory of Differential Operators in Quantum Theory} (J.~Behrndt, F.~Gesztesy, A.~Laptev, and C.~Tretter organisers) held at the ESI Vienna on 7-11 November 2022, and by B.~Benhellal at the workshop \emph{Singular Perturbations and Geometric Structures} (M.~Gallone, A.~Michelangeli, and L.~Rizzi organisers) held at SISSA Trieste on 21-23 November 2022. The author is also most grateful to the Erwin Schr\"{o}dinger International Institute for Mathematics and Physics (ESI), Vienna, and to the Mathematical Institute at the Silesian University in Opava (and in particular to N.~A.~Caruso and K.~Has\'ik), for the kind hospitality during the period in which this project was set up.}

\maketitle

%\tableofcontents

\section{Introduction and background}\label{sec:intro}

The general problem of producing prescribed portions and types of additional spectrum by constructing suitable self-adjoint extensions of certain symmetric operators, or also, on a closely related footing, by taking singular perturbations of self-adjoint operators, is a long lasting one in operator theory, both in abstract terms and in application, typically, to differential operators. This is witnessed, beside many fundamental precursors that are going to be briefly surveyed in a moment, by the recent advances made by the group of Prof J.~Behrndt and the group of Prof K.~Pankrashkin, which provided the main inspiration and motivation for the present note and will be illustrated below.

The playground for such a problem are \emph{gapped} operators, namely densely defined symmetric operators $S$ on a complex Hilbert space $\cH$ admitting an open (finite or semi-infinite) interval $J=(a,b)$ as a `gap', in the precise sense that
% \begin{equation}\label{eq:gapcondition}
%   \langle f,Sf\rangle\;\geqslant\;b\,\|f\|^2 \qquad \forall f\in\mathcal{D}(S)\,.
% \end{equation}
%%%%%%%%%%%%%%%%%%%%%%%%%%%%%%%%%%%% TRUE BELOW
% \begin{equation}
\begin{equation}\label{eq:gapcondition}
\begin{split}
 \langle f,Sf\rangle\;&\geqslant\;b\,\|f\|^2\qquad\qquad \forall f\in\mathcal{D}(S) \qquad\textrm{if } a\,=\,-\infty\,, \\
 \Big\|\Big(S-\frac{a+b}{2}\mathbbm{1}\Big)f\Big\|\;&\geqslant\;\frac{b-a}{2}\,\|f\| \qquad \;\,\forall f\in\mathcal{D}(S) \qquad \textrm{if } a\,>\,-\infty\,. 
\end{split}
\end{equation}
% 
%  \begin{align}
%   \langle f,Sf\rangle\;&\geqslant\;b\,\|f\|^2 & \forall f\in\mathcal{D}(S) & \qquad\textrm{if } a\,=\,-\infty\,, \\
%   \Big\|\Big(S-\frac{a+b}{2}\mathbbm{1}\Big)f\Big\|\;&\geqslant\;\frac{b-a}{2}\,\|f\|^2 & \forall f\in\mathcal{D}(S) & \qquad\textrm{if } a\,>\,-\infty\,.
%  \end{align}
% \end{equation}
Here and in the following, as customary, the notation $\langle\cdot,\cdot\rangle$ indicates the scalar product of $\cH$ (and $\|\cdot\|$ is the associated norm), with the convention that the anti-linear entry is the left one, and $\mathcal{D}(A)$ denotes the domain of an operator $A$.

As a matter of fact, owing to classical results by Kre{\u\i}n \cite[Theorem 21]{Krein-1947}, Stone \cite[Theorem 9.21]{Stone1932}, Friedrichs \cite{Friedrichs1934}, and Freudenthal \cite{Freudenthal-1936}, the gap condition \eqref{eq:gapcondition} is equivalent to the existence of a self-adjoint extension $\widetilde{S}$ of $S$ having $J$ as a gap in its spectrum, i.e., $\sigma(\widetilde{S})\cap J=\emptyset$. Also, one sees that in the limit of infinite gap the second condition of \eqref{eq:gapcondition}, equivalently re-written as
\[
 \|Sf\|^2-(a+b)\langle f,Sf\rangle + ab\|f\|^2\;\geqslant\;0\,,
\]
takes the form of the first of \eqref{eq:gapcondition} upon dividing by $a<0$ and letting $a\to-\infty$.

 Now, a gapped symmetric operator $S$ whose deficiency indices
 \begin{equation*}
  \dim(\ker(S^*-z\mathbbm{1}))\qquad\textrm{and}\qquad \dim(\ker(S^*-\overline{z}\mathbbm{1}))
 \end{equation*}
 (that, owing to the existence of a gap, are necessarily equal for any $z\in\mathbb{C}^+\cup J$) are \emph{infinite} allows for such a vast variety of inequivalent self-adjoint extensions to make it rather plausible to expect that there \emph{are} extensions of $S$ having any sort of prescribed spectrum within the gap $J$.

 This picture has been actually confirmed, with many related aspects still under investigation, from multiple perspectives:
 \begin{itemize}
  \item by considering the abstract existence or also abstract construction of self-adjoint extensions with prescribed spectrum in the gap \cite{Brasche-Neidhardt-Weidmann-1992,Brasche-Neidhardt-Weidmann-1992-1993,Brasche-Neidhardt-1995-absspectr,Brasche-Neidhardt-1996,Albeverio-Brasche-Neidhardt-JFA1998,Brasche-JOT2000,Brasche2001_ContMath2004,Albeverio-Brasche-Malamud-Neidhardt-JFA2005,Behrndt-Khrabustovskyi-2021};
    \item by considering abstract \emph{singular perturbations} of a self-adjoint operator (in the standard sense, e.g., of \cite{albverio-kurasov-2000-sing_pert_diff_ops}), which display prescribed eigenvalues and eigenvectors \cite{Kondej-2001,Albe-Dudkin-Konst-Koshm-2005,Konstantinov-2005,Albe-Dudkin-Konst-Koshm-2007};
%   \item by focussing on an explicit gapped differential operator of interest on bounded domains of $\mathbb{R}^d$ and identifying boundary conditions that select a self-adjoint extension with prescribed spectrum;
  \item by focussing on explicit gapped differential operators on variable bounded domains of $\mathbb{R}^d$ with given boundary condition of self-adjointness, and identifying which shape of the domain reflects into spectrum of prescribed type and position \cite{ColinVerd-1987,Hempel-Seco-Simon-1991-JFA1992,Hempel-Kriecherbauer-Plankensteiner-1997,Behrndt-Khrabustovskyi-2021};
  \item by considering singular self-adjoint perturbations, supported on points or more generally on manifolds of non-zero co-dimension, of an explicit differential operator (Laplace, Dirac, etc.), and identifying which location, shape, strength of the point-like perturbation reflect into spectrum of prescribed type and position
  \cite{albeverio-geyler-1998-CMP2000,Geyler-Pankrashkin_QMat7-1998,Behrndt-Holzmann-2017_JSpecTh2020,Behrndt-Khrabustovskyi-2021,Behrndt-Khrabustovskyi-2021-JFA2022,Behrndt-Holzmann-Stelzer-Stenzel-2022,Benhellal-Pankrashkin-2022}.
 \end{itemize}

 The works cited above provide extra background and older references: in fact, all such mainstream ideally dates back to a very classical result by Kre{\u\i}n \cite[Theorem 23]{Krein-1947} on the existence, for gapped symmetric operators with (equal and) \emph{finite} deficiency indices, of self-adjoint extensions with prescribed finite point spectrum in the gap.

 At the abstract operator-theoretic level it is today known that \emph{any kind of spectrum inside the gap of a gapped symmetric operator $S$ with \emph{infinite} deficiency indices on a separable complex Hilbert space $\cH$ can be generated by a suitable self-adjoint extension of $S$ in $\cH$}. This quite remarkable result is due to Brasche \cite[Theorem 27]{Brasche2001_ContMath2004} as the culmination of a extensive and prolific investigation 
 %initiated by Neidhardt (including the notable precursors 
 that includes the notable precursors \cite[Theorem 2.2]{Brasche-Neidhardt-Weidmann-1992}, \cite[Theorem 1]{Brasche-Neidhardt-1996}, \cite[Theorem 6.1]{Albeverio-Brasche-Neidhardt-JFA1998}, \cite[Theorems 4.5 and 6.3]{Brasche-JOT2000} from collaborations with Albeverio, Neidhardt, and Weidmann, and also from hints by Simon \cite{Brasche-JOT2000}.

 Despite the `ultimate' character of such a general result, the focus remains active both on the abstract operator-theoretic mechanisms for the appearance of spectrum of prescribed type and location within the gap (on which further important comments are going to follow later in this Introduction), and on a variety of explicit differential operators of relevance in applications, self-adjointly realised as extensions of gapped symmetric operators.

 In this respect, it is instructive to highlight the recent activity by Behrndt and collaborators, as well as by Pankrashkin and collaborators, concerning the appearance of pre-definite parts of essential spectrum for self-adjoint realisations of Schr\"{o}dinger and Dirac operators with singular perturbation supported on a suitable manifold of positive co-dimension in $\mathbb{R}^d$. They provided valuable insight on such spectral phenomenon.

  In a first significant model, a quantum particle scatters in one dimension across an infinite sequence of delta-like impurities supported at points $x_1,x_2,\dots$. The Hamiltonian is formally of the type
 \[
  H_{\Sigma,\gamma}\;=\;-\frac{\ud^2}{\ud x^2}+\sum_{k=1}^\infty \gamma_k \delta(\cdot-x_k)\,,
 \]
 for a given sequence $\Sigma\equiv(x_k)_{k\in\mathbb{N}}\subset (\ell_-,\ell_+)$ of singularity locations, and a given sequence $\gamma\equiv(\gamma_k)_{k\in\mathbb{N}}\subset \mathbb{R}$ of interaction magnitudes, and it is self-adjointly realised on $L^2(\ell_-,\ell_+)$ by means of the boundary conditions
 \[
  \psi(x_k^+)\;=\;\psi(x_k^-)\,,\qquad \psi'(x_k^+)-\psi'(x_k^-)\;=\;\gamma_k\psi(x_k)\,,\qquad \psi(\ell_\pm)\;=\;0
 \]
 imposed on functions $\psi\in H^1((\ell_-,\ell_+)\cap H^2((\ell_-,\ell_+)\setminus\bigcup_{k\in\mathbb{N}} \{x_k\})$. This is by now a well-established construction, comprehensively discussed, e.g., in  \cite[Chapters II.2 and III.2]{albeverio-solvable} as well as in \cite{Albe-Kostenko-Malamud-JMP2010,Kostenko-Malamud-JDE2010,Behrndt-Khrabustovskyi-2021-JFA2022}). Clearly, any such $H_{\Sigma,\gamma}$ is a self-adjoint extension of the non-negative symmetric operator
 \[
  \mathring{H}\;=\;\Big(-\frac{\ud^2}{\ud x^2}\Big)\bigg|_{C^\infty_c((\ell_-,\ell_+)\setminus\bigcup_{k\in\mathbb{N}} \{x_k\})}\,.
 \]
 It has been recently proved \cite[Theorems 1.1 and 4.4]{Behrndt-Khrabustovskyi-2021-JFA2022} that for bounded $(\ell_-,\ell_+)$ any pre-definite closed and lower semi-bounded subset $S_{\mathrm{ess}}\subset \mathbb{R}$ is the \emph{essential spectrum} of one of the $H_{\Sigma,\gamma}$'s, for a suitable choice of $(x_k)_{k\in\mathbb{N}}$ and $(\gamma_k)_{k\in\mathbb{N}}$. An analogous result \cite[Theorems 3.1 and 3.3]{Behrndt-Khrabustovskyi-2021}, under the constraint $\{0\}\subset S_{\mathrm{ess}}\subset[0,+\infty)$, has been also proved for the counterpart model formally described by
 \[
  \widetilde{H}_{\Sigma,\gamma}\;=\;-\frac{\ud^2}{\ud x^2}+\sum_{k=1}^\infty \gamma_k \langle\delta'_{x_k},\cdot\rangle\,\delta'_{x_k}\,,
 \]
 where $\delta'_{x_k}\equiv\delta'(\cdot-x_k)$ is the distributional derivative of $\delta(\cdot-x_k)$ and $\langle\delta'_{x_k},\cdot\rangle$ denotes its action $\phi\mapsto\langle\delta'_{x_k},\phi\rangle=-\phi'(x_k)$ on test functions $\phi$: the $\widetilde{H}_\gamma$'s constitute another class of self-adjoint extensions of $\mathring{H}$ and are self-adjointly realised with action $-\frac{\ud^2}{\ud x^2}$ and boundary conditions of self-adjointness
  \[
  \psi'(x_k^+)\;=\;\psi'(x_k^-)\,,\qquad \psi(x_k^+)-\psi(x_k^-)\;=\;\gamma_k\psi'(x_k)\,.
  %,\qquad \psi(\ell_\pm)\;=\;0
 \]
 Either result relies on a similar idea as in the preceding construction \cite{Hempel-Seco-Simon-1991-JFA1992}  of bounded domains of `room-and-passages' type, where the self-adjoint Neumann Laplacian is proved to have also essential spectrum -- somewhat contrarily to the intuition that self-adjoint Laplace-type operators on bounded domains only have purely discrete spectrum (or, equivalently, a compact resolvent).

 A second type of model under deep scrutiny, from the same perspective of emergence of essential spectrum in a gap by self-adjoint extension, concerns the three-dimensional Dirac operator with critical combination of electrostatic and Lorenz scalar shell interaction supported at compact surfaces. In this case the Hamiltonian governing the evolution of the quantum particle is the operator $H_{\Sigma,\varepsilon,\mu}$ in $L^2(\mathbb{R}^3,\mathbb{C}^4)$ formally acting as
 \[
  H_{\Sigma,\varepsilon,\mu}\;=\;-\ii\sum_{j=1}^3 \alpha_j \partial_{x_j} + m \beta + (\varepsilon\mathbbm{1}+\mu\beta)\delta_\Sigma
 \]
 for given mass, electrostatic shell, and Lorenz shell parameters, respectively, $m\geqslant 0$ and $\varepsilon,\mu\in\mathbb{R}$, where $\alpha_1,\alpha_2,\alpha_3,\beta$ are four anti-commuting $4\times 4$ Hermitian matrices squaring to the identity -- for concreteness, the usual choice of the Pauli matrices, $x\equiv(x_1,x_2,x_3)\in\mathbb{R}^3$, and $\delta_\Sigma$ is the delta distribution along the normal to $\Sigma$ at each point of a given sufficiently smooth compact surface $\Sigma\subset\mathbb{R}^3$. Thus, $H_{\Sigma,\varepsilon,\mu}$ is self-adjointly realised by means of $\delta$-type boundary conditions along the normal direction at each point of $\Sigma$, and is by construction a self-adjoint extension of
 \[
  \mathring{H}\;=\;\bigg(\ii\sum_{j=1}^3 \alpha_j \partial_{x_j} + m \beta\bigg)\bigg|_{C^\infty_c(\mathbb{R}^3\setminus\Sigma,\mathbb{C}^4)}\,,
 \]
 or also, from another point of view, $H_{\Sigma,\varepsilon,\mu}$ is a \emph{singular perturbation} (in the sense of, e.g., \cite{albverio-kurasov-2000-sing_pert_diff_ops}) of the self-adjoint operator $H_0$ defined by
 \[
  \mathcal{D}(H_0)\;=\;H^1(\mathbb{R}^3,\mathbb{C}^4)\,,\qquad H_0\psi\;=\;-\ii\sum_{j=1}^3 \alpha_j \partial_{x_j}\psi + m \beta\psi\,.
 \]
 As a matter of fact, $H_0$ is gapped with 
 \[
  \sigma(H_0)\;=\;\sigma_{\mathrm{ess}}(H_0)=(-\infty,-m]\cup[m,+\infty)\,,
 \]
 and for $\varepsilon^2-\mu^2\neq 4$ each singular perturbations $H_{\Sigma,\varepsilon,\mu}$ preserve the essential spectrum and only produces at most finitely many eigenvalues of finite multiplicity within the gap \cite{Behrndt-Exner-Holtzmann-Loto-2019-Diracshell,Behrndt-Holzmann-Stelzer-Stenzel-2022,OurmBonafos-Pizzichillo-INdAM2021}. This is analogous to the more classical problem (see \cite{MG_DiracCoulomb2017,GM-2017-DC-EV} and references therein)  of the self-adjoint realisations of the Dirac-Coulomb Hamiltonian as extensions of
 \[
  \bigg(\ii\sum_{j=1}^3 \alpha_j \partial_{x_j} + m \beta+\frac{\nu}{\,|x|\,}\bigg)\bigg|_{C^\infty_c(\mathbb{R}^3\setminus\{0\},\mathbb{C}^4)}
 \]
 in the relevant regime $\nu\in\mathbb{R}$, $\frac{\sqrt{3}}{2}<|\nu|<1$ of existence of multiple extensions: each such  self-adjoint realisation comes with the same essential spectrum as $\sigma_{\mathrm{ess}}(H_0)$ and with additional point spectrum consisting of an infinity of eigenvalues of finite multiplicity, accumulating to one of the two edges of the gap $(-m,m)$ (depending on the sign of $\nu$). Instead, in the critical regime $\varepsilon^2-\mu^2=4$ it has been recently revealed that $H_{\Sigma,\varepsilon,\mu}$ above may have additional essential spectrum (as compared to $H_0$) in the gap $(-m,m)$: in particular \cite[Theorem 6.6]{Behrndt-Holzmann-Stelzer-Stenzel-2022}, if $\Sigma$ is partially flat, then the point $-\frac{m\mu}{\varepsilon}$ belongs to $\sigma_{\mathrm{ess}}(H_{\Sigma,\varepsilon,\mu})$. Even more strikingly \cite[Theorem 1]{Benhellal-Pankrashkin-2022}, in terms of the constant
 \[
  A_{\Sigma}\;:=\;\max_{x\in\Sigma}|\kappa_1(x)-\kappa_2(x)|\,,
 \]
 where $\kappa_1$ and $\kappa_2$ are the two principal curvatures of the smooth and compact surface $\Sigma$, one has
 \[
  \sigma_{\mathrm{ess}}(H_{\Sigma,\varepsilon,\mu})\;=\;\sigma_{\mathrm{ess}}(H_0)\cup\Big[-\frac{m\mu}{\varepsilon}-\frac{A_\Sigma}{\,2\varepsilon}\,,-\frac{m\mu}{\varepsilon}+\frac{A_\Sigma}{\,2\varepsilon}\,\Big]\,.
 \]
  When $\Sigma$ is a sphere (or a union of disjoint spheres), and only in that case, $\kappa_1\equiv\kappa_2$ \cite[Prop.~4, Sect.~3.2]{DoCarmo_DiffGeom-2016}, thereby implying that for such $\Sigma$ the operator $H_{\Sigma,\varepsilon,\mu}$ produces one single point of additional essential spectrum.

 As a matter of fact, a common trait of the recent activity illustrated above
 %, apart from confirming the current topicality of the subject, 
 is that the emergence of additional essential spectrum in the gap is detected through various types of analyses that are all particularly laborious, to say the least, both in explicit models of singular perturbations of certain Schr\"{o}dinger or Dirac operators, and in abstract settings.

 Thus, for instance, the spectral analysis \cite{Benhellal-Pankrashkin-2022} of the above-mentioned three-dimensional Dirac operator with critical electrostatic-Lorenz scalar shell interaction goes first through a reformulation in terms of matrix singular integral operators over the surface $\Sigma$ and a subsequent reduction, by means of the theory of block operator matrices, to the analysis of the essential spectrum of a particular pencil (in fact, a Schur complement) of integral operators over $\Sigma$. In turn, this analysis is performed in terms of principal symbols of suitable auxiliary operators, and it relies on a subtle toolbox concerning the spectrum of classical pseudo-differential operators of order zero on compact Riemannian manifolds.

 As for the emergence of pre-definite regions of essential spectrum of one-di\-men\-sion\-al singular Schr\"{o}dinger operators with countably infinite point-like $\delta$-in\-ter\-ac\-tions, this is obtained in \cite{Behrndt-Khrabustovskyi-2021-JFA2022} by choosing location and magnitude of the singular interactions so as to partition $(\ell_-,\ell_+)$ into adjacent intervals on which one builds an auxiliary operator of singular point interaction in the very same spirit of the room-and-passage construction \cite{Hempel-Seco-Simon-1991-JFA1992} for Neumann Laplacians: such auxiliary operator is constructed so that its eigenvalues constitute a dense of the prescribed spectral set $S_{\mathrm{ess}}$, and the final $H_{\Sigma,\gamma}$ is then built as a compact perturbation, in the resolvent sense, of the auxiliary operator; it therefore displays the same essential spectrum. (The singular $\delta'$-interaction case is treated along the same ideas \cite{Behrndt-Khrabustovskyi-2021}.)

 The abstract operator-theoretic analysis that culminated with the above-mention\-ed result by Brasche is no cheap business either, as the subtle works \cite{Brasche-Neidhardt-1996,Albeverio-Brasche-Neidhardt-JFA1998,Brasche-JOT2000,Brasche2001_ContMath2004} show. In such investigations, starting with a symmetric operator $S$ with infinite deficiency indices and with gap $J$, the self-adjoint extension of $S$ which provides the pre-definite region and type of spectrum inside the gap is eventually identified `indirectly' up to unitary equivalence, in the sense that for any arbitrary auxiliary self-adjoint operator $A_{\mathrm{aux}}$ an operator $A=A^*\supset S$ is shown to exists such that $A E_A(J)$ is unitarily equivalent to $A_{\mathrm{aux}} E_{A_{\mathrm{aux}}}(J)$ (the notation $E_{H}(\cdot)$ standing for the operator-valued measure associated with the self-adjoint $H$). This line of reasoning, as far as the existence of extensions with pre-definite absolutely continuous or singular continuous spectrum in the gap is concerned, employs ingenious arguments from measure theory and block operators, and in a sense eventually leaves the `explicit mechanism' of construction of $A$ from $S$ somewhat obfuscated.
%  in a sense it somewhat obfuscates the `explicit mechanism' of construction 
%  it somewhat obfuscates the `explicit mechanism' by which an actual extension $A$ with the desired spectrum within the gap is built starting from $S$ \emph{within a general self-adjoint extension scheme}. 
 
 It is instead cleaner, in its basic mechanisms, how to abstractly construct self-adjoint extensions with only prescribed \emph{point spectrum} within the gap -- hence also with prescribed \emph{essential spectrum}, as one may select a countable dense in the assigned region of new essential spectrum and build the desired self-adjoint extension so that it admits the previously selected points as eigenvalues. The idea comes from the already mentioned `starting' result \cite[Theorem 23]{Krein-1947} by Kre{\u\i}n for finite deficiency indices: in case of infinite deficiency indices the same idea was refined and adapted in \cite[Theorem 2.2]{Brasche-Neidhardt-Weidmann-1992}, \cite[Lemma 2.1 and Proposition 3.1]{Albeverio-Brasche-Neidhardt-JFA1998}, \cite[Lemma 29 and Corollary 30]{Brasche2001_ContMath2004}, \cite[Theorem 3.3]{Albe-Dudkin-Konst-Koshm-2005}. This approach will be further discussed in Sections \ref{sec:preparation-unital} and \ref{sec:geninfind} in comparison to the construction presented in the present work.

%  by Brasche, Neidhardt, and Weidmann \cite[Theorem 2.2]{Brasche-Neidhardt-Weidmann-1992}, by Brasche \cite[Lemma 29 and Corollary 30]{Brasche2001_ContMath2004}, and in the cleanest form by
%  
%  Albeverio, Brasche, and Neidhardt \cite[Lemma 2.1 and Proposition 3.1]{Albeverio-Brasche-Neidhardt-JFA1998}
%  
%  Albeverio, Konstantinov, and Koshmanenko \cite[Theorem 3.3]{Albe-Dudkin-Konst-Koshm-2005}.

 In fact, it is precisely \emph{the construction, in the infinite deficiency index case, of self-adjoint extensions with pre-definite point spectrum (respectively, essential spectrum) within the gap} which is at the core of this note, both in view of the abstract results cited here above, and of the previously reviewed recent analysis of explicit models. 
 
 In particular, \emph{the perspective here is to identify the mechanism of emergence of the desired extension with respect to general self-adjoint extension schemes}. 
 
 That is: how does the prescribed set of point spectrum, or of essential spectrum, select the desired operator, out of the general family of self-adjoint extensions produced and classified by abstract extension schemes?

%  In practice, if $(\lambda_k)_{k=1}^{\mathfrak{n}}$ is a sequence within the gap $J$ of a densely defined symmetric operator $S$ in a complex Hilbert space $\cH$, with $\mathfrak{n}\in\mathbb{N}\cup\{\infty\}$ not exceeding the deficiency indices of $S$, then there is an orthonormal system $(u_n)_n$ in 
%  
%  
%  one defines
%  \[
%   M\;:=\;S^*\upharpoonright 
%  \]
% 
%  
%  
%  
%  one proceeds iteratively as follows. For some one-dimensional subspace $V_1\subset\ker(S^*-\lambda_1\mathbbm{1})$ one defines
%  \begin{equation}
%   \begin{split}
%      S_1'\;&:=\;S^*\upharpoonright(\mathcal{D}(\overline{S})\dotplus V_1)\,, \\
%      \mathcal{H}_1\;&:=\;\cH\ominus V_1\,, \\
%      S_1\;&:=\;S_1'\upharpoonright(\mathcal{D}(S_1')\cap\cH_1)\,,\quad M_1\;:=\;S_1'\upharpoonright (\mathcal{D}(S_1')\cap V_1)\,.
%   \end{split}
%  \end{equation}

 Recently, Behrndt and Khrabustovskyi \cite[Theorem 4.3]{Behrndt-Khrabustovskyi-2021} have partially explored this direction, producing an elegant, clean and short proof of Brasche's theorem in the case of new essential spectrum, which has the value of characterising the desired extension as a convenient restriction of $S^*$ with respect to the decomposition
 \[
  \mathcal{D}(S^*)\;=\;\mathcal{D}(\overline{S})\dotplus\ker(S^*-\mu\mathbbm{1})
 \]
 for $\mu$ in the gap of $S$, the latter identity being the starting point of any abstract self-adjoint extension scheme (see \eqref{eq:domainadjoint} below). Their argument, however, relies on the crucial additional assumption that $S$ admit a self-adjoint extension with \emph{compact resolvent}: this condition is exploited in multiple steps, in particular showing that from an arbitrary self-adjoint operator $\Xi$ on the infinite-dimensional Hilbert space $\ker(S^*-\mu\mathbbm{1})$ one can build a self-adjoint restriction $A$ of $S^*$ which is, upon identifying $\Xi$ as an operator on the whole $\cH=\ker(S^*-\mu\mathbbm{1})\oplus\mathrm{ran}(\overline{S}-\mu\mathbbm{1})$, a compact perturbation in the resolvent sense of $\Xi$ itself, thereby implying that $\sigma_{\mathrm{ess}}(A)=\sigma_{\mathrm{ess}}(\Xi)$.

 In this note, the emergence of arbitrary new point spectrum and essential spectrum in the gap when the deficiency index is infinite is shown in its simple and clean operator-theoretic mechanism within the general scheme of self-adjoint extensions, with no restriction of separability, compactness of the resolvent, and the like.

 Useful preparations are discussed in Section \ref{sec:preparation-unital}, including a crucial construction in the prototypical case of \emph{unital} deficiency index.

 The general mechanism when the deficiency index is \emph{infinite} is then discussed in Section \ref{sec:geninfind}.

 \section{Preparation: unital deficiency index}\label{sec:preparation-unital}

 Let us start by recalling the following general construction and classification of \emph{all} the self-adjoint extensions of a given gapped symmetric operator. Up to an additive shift by a suitable multiple of the identity, it is clearly non-restrictive to assume the gap to be around zero, which will be done throughout.

 This is a classical result that emerged about seven decades ago as the culmination of seminal works by Kre{\u\i}n \cite{Krein-1947}, Vi\v{s}ik \cite{Vishik-1952}, and Birman \cite{Birman-1956}, and is therefore fair to refer to as the Kre{\u\i}n-Vi\v{s}ik-Birman theory, or also Kre{\u\i}n-Vi\v{s}ik-Birman-Grubb theory in view of the subsequent adaptation by Grubb \cite{Grubb-1968} to the more general, but technically similar problem of the closed extensions of closed operators (see, e.g., \cite{GMO-KVB2017} and \cite[Chapter 2]{GM-SelfAdj_book-2022} and the references therein, as well as \cite[Chapter 13]{Grubb-DistributionsAndOperators-2009} and \cite{KM-2015-Birman}). In more recent times it has become customary to re-derive Theorem \ref{thm:VB-representaton-theorem_Tversion2} below within the theory of boundary triplets (one updated reference to which is \cite{Behrndt-Hassi-deSnoo-boundaryBook}), a modern reformulation of the approach by Vi\v{s}ik and Birman. The original classification a la Birman is particularly efficient and informative in the mathematical analysis of several quantum mechanical models of current interest -- see, e.g., \cite{MO-2016,MO-2017,MG_DiracCoulomb2017,GM-2017-DC-EV,GM-hydrogenoid-2018,GMP-Grushin2-2020,GM-Grushin3-2020,M2020-BosonicTrimerZeroRange}.

 \begin{theorem}\label{thm:VB-representaton-theorem_Tversion2}\index{theorem!Kre{\u\i}n-Vi\v{s}ik-Birman (self-adjoint extension)}
 Let $S$ be a densely defined symmetric operator in a complex Hilbert space $\cH$, which admits a self-adjoint extension $S_\mathrm{D}$ that has everywhere defined bounded inverse on $\cH$ -- equivalently, assume that $S$ is a densely defined gapped symmetric operator with zero in the gap. There is a one-to-one correspondence between the  family of the self-adjoint extensions of  $S$ in $\cH$ and the family of the self-adjoint operators in Hilbert subspaces of $\ker S^*$. If $T$ is any such operator, in the correspondence $T\leftrightarrow S_T$ each self-adjoint extension $S_T$ of $S$ is given by
\begin{equation}\label{eq:ST-2}
\begin{split}
S_T\;&:=\;S^*\upharpoonright\mathcal{D}(S_T)\,, \\
\mathcal{D}(S_T)\;&:=\;\left\{f+S_\mathrm{D}^{-1}(Tv+w)+v\left|
\begin{array}{c}
f\in\mathcal{D}(\overline{S})\,,\;v\in\mathcal{D}(T) \\
w\in\ker S^*\cap\mathcal{D}(T)^\perp
\end{array}
\right.\right\}.
%\mathcal{D}(\overline{S})\,+\,S_\mathrm{F}^{-1}(Tu+v)\,+\,u
\end{split}
\end{equation}
\end{theorem}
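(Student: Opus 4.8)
The plan is to anchor everything on the decomposition of $\mathcal{D}(S^*)$ induced by the distinguished extension $S_\mathrm{D}$. Since $S_\mathrm{D}=S_\mathrm{D}^*$ has an everywhere-defined bounded inverse, $0\in\rho(S_\mathrm{D})$, and the gap condition makes $\overline{S}$ bounded below; hence $\ran\overline{S}$ is closed and $\cH=\ran\overline{S}\oplus\ker S^*$ orthogonally, using $(\ran\overline{S})^\perp=\ker(\overline{S})^*=\ker S^*$. From this I would derive the \emph{Birman decomposition}
\[
 \mathcal{D}(S^*)\;=\;\mathcal{D}(\overline{S})\,\dotplus\,S_\mathrm{D}^{-1}(\ker S^*)\,\dotplus\,\ker S^*\,,
\]
writing each $g\in\mathcal{D}(S^*)$ uniquely as $g=f+S_\mathrm{D}^{-1}\eta+v$ with $f\in\mathcal{D}(\overline{S})$ and $\eta,v\in\ker S^*$: indeed $g_\mathrm{D}:=S_\mathrm{D}^{-1}S^*g\in\mathcal{D}(S_\mathrm{D})$ gives $v:=g-g_\mathrm{D}\in\ker S^*$, while splitting $S_\mathrm{D}g_\mathrm{D}=\overline{S}f+\eta$ along $\cH=\ran\overline{S}\oplus\ker S^*$ yields $g_\mathrm{D}=f+S_\mathrm{D}^{-1}\eta$. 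One reads off $S^*g=\overline{S}f+\eta$, so $(\,v,\eta\,)$ serve as abstract boundary data. A direct computation, in which $\langle\overline{S}f,v'\rangle$ and $\langle v,\overline{S}f'\rangle$ vanish by orthogonality, the $\overline{S}$-terms cancel by symmetry of $\overline{S}$, and the $S_\mathrm{D}^{-1}$-terms cancel by self-adjointness of $S_\mathrm{D}^{-1}$, then gives the abstract Green's identity
\[
 \langle S^*g,g'\rangle-\langle g,S^*g'\rangle\;=\;\langle\eta,v'\rangle-\langle v,\eta'\rangle\,,
\]
so that self-adjoint restrictions of $S^*$ are exactly the maximal subspaces of $\mathcal{D}(S^*)$ annihilating the right-hand side.

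For the forward implication, given a self-adjoint $T$ in the Hilbert subspace $\mathfrak{h}:=\overline{\mathcal{D}(T)}\subseteq\ker S^*$, the set $\mathcal{D}(S_T)$ is precisely the locus $\eta=Tv+w$ with $v\in\mathcal{D}(T)$ and $w\in\mathfrak{h}^\perp=\ker S^*\cap\mathcal{D}(T)^\perp$. Symmetry of $S_T$ is immediate from Green's identity, since $\langle Tv+w,v'\rangle-\langle v,Tv'+w'\rangle=(\langle Tv,v'\rangle-\langle v,Tv'\rangle)+(\langle w,v'\rangle-\langle v,w'\rangle)=0$ because $T$ is symmetric and $w,w'\in\mathfrak{h}^\perp$ while $v,v'\in\mathcal{D}(T)\subseteq\mathfrak{h}$. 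For self-adjointness I would prove $\mathcal{D}(S_T^*)\subseteq\mathcal{D}(S_T)$: if $g'$ with data $(v',\eta')$ satisfies $\langle Tv+w,v'\rangle=\langle v,\eta'\rangle$ for all $v\in\mathcal{D}(T)$ and $w\in\mathfrak{h}^\perp$, then taking $v=0$ forces $v'\perp\mathfrak{h}^\perp$, i.e. $v'\in\mathfrak{h}$; taking $w=0$ yields $\langle Tv,v'\rangle=\langle v,P_{\mathfrak{h}}\eta'\rangle$ for all $v\in\mathcal{D}(T)$, which means $v'\in\mathcal{D}(T^*)=\mathcal{D}(T)$ with $P_{\mathfrak{h}}\eta'=Tv'$. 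Hence $\eta'=Tv'+w'$ with $w':=P_{\mathfrak{h}^\perp}\eta'\in\mathfrak{h}^\perp$, so $g'\in\mathcal{D}(S_T)$ and $S_T=S_T^*$.

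For the converse and the bijectivity, given an arbitrary self-adjoint extension $A$ (so $\overline{S}\subseteq A=A^*\subseteq S^*$), the inclusion $\mathcal{D}(\overline{S})\subseteq\mathcal{D}(A)$ shows that the boundary data define a linear relation $\Theta=\{(v,\eta):S_\mathrm{D}^{-1}\eta+v\in\mathcal{D}(A)\}\subseteq\ker S^*\times\ker S^*$, the datum $(v,\eta)$ being uniquely determined by $g$. By Green's identity, symmetry of $A$ forces $\Theta$ to be symmetric for the form $\langle\eta,v'\rangle-\langle v,\eta'\rangle$, and $A=A^*$ forces $\Theta$ to be maximal, i.e. self-adjoint. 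The canonical splitting of a self-adjoint relation into its multivalued part $\mathrm{mul}\,\Theta=\{\eta:(0,\eta)\in\Theta\}=:\mathfrak{h}^\perp$ and an operator part that is self-adjoint on $\mathfrak{h}:=\ker S^*\ominus\mathfrak{h}^\perp$ then produces the operator $T$ with $(v,\eta)\in\Theta$ iff $v\in\mathcal{D}(T)$ and $\eta=Tv+w$, $w\in\mathfrak{h}^\perp$; this reconstructs $\mathcal{D}(A)$ in the stated form, so $A=S_T$, and $T$ is unique because distinct $T$'s yield distinct data loci.

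I expect the main obstacle to be exactly this converse step: distilling from the mere maximal symmetry of $\mathcal{D}(A)$ a genuine self-adjoint operator $T$. Concretely, one must show that $\Theta$ is single-valued over $\mathfrak{h}$, that $\mathcal{D}(T)$ is dense in $\mathfrak{h}=\overline{\mathcal{D}(T)}$, and that $\mathcal{D}(T^*)=\mathcal{D}(T)$, which amounts to correctly isolating and discarding the multivalued part $\mathfrak{h}^\perp$ that corresponds to the \emph{free} parameter $w$. Everything else reduces to bookkeeping within the fixed decomposition $g=f+S_\mathrm{D}^{-1}\eta+v$.
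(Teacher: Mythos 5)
The paper itself does not prove Theorem \ref{thm:VB-representaton-theorem_Tversion2}: it quotes it as the classical Kre{\u\i}n--Vi\v{s}ik--Birman result and defers to the cited literature (e.g.\ \cite{GMO-KVB2017}, \cite[Chapter 2]{GM-SelfAdj_book-2022}, and the boundary-triplet references), so there is no internal proof to compare against. Your argument is precisely that standard derivation --- the decomposition $\mathcal{D}(S^*)=\mathcal{D}(\overline{S})\dotplus S_\mathrm{D}^{-1}\ker S^*\dotplus\ker S^*$, the abstract Green identity $\langle S^*g,g'\rangle-\langle g,S^*g'\rangle=\langle\eta,v'\rangle-\langle v,\eta'\rangle$ in the boundary data, and the classification of self-adjoint restrictions of $S^*$ (each of which automatically extends $\overline{S}$, since $A=A^*\subseteq S^*$ gives $A\supseteq S^{**}=\overline{S}$) by self-adjoint linear relations $\Theta$ in $\ker S^*$ --- and it is correct. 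The one step you leave schematic, the splitting of a self-adjoint relation into a self-adjoint operator part on $\mathfrak{h}=\overline{\mathrm{dom}\,\Theta}$ and the multivalued part $\mathrm{mul}\,\Theta=(\mathrm{dom}\,\Theta)^\perp$, is a standard short lemma, and you correctly isolate it as the only point requiring verification.
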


 Observe that indeed $\mathcal{D}(S)\subset\mathcal{D}(S_T)\subset\mathcal{D}(S^*)$, as
 \begin{equation}\label{eq:domainadjoint}
  \mathcal{D}(S^*)\;=\;\mathcal{D}(\overline{S})\dotplus S_{\mathrm{D}}^{-1}\ker S^*\dotplus \ker S^*
 \end{equation}
 (see, e.g., \cite[Proposition 2.2]{GM-SelfAdj_book-2022}).

 It is convenient and customary to refer to the operator $T$ indexing the self-adjoint extension $S_T$ as the \emph{(Birman) extension parameter} of $S_T$.

%  For later convenience, two more consequences of Theorem \ref{thm:VB-representaton-theorem_Tversion2} are stated here below (see, e.g., \cite[Theorems 2.22 and 2.25]{GM-SelfAdj_book-2022}). 
%  
%  
 It is known that quite an amount of information on $S_T$ may be read out from the often `easier' labelling operator $T$: one, in particular, is invertibility (see, e.g., \cite[Theorem 2.22]{GM-SelfAdj_book-2022}):	

  \begin{theorem}\label{thm:invertibility-gen}
  Under the assumptions Theorem \ref{thm:VB-representaton-theorem_Tversion2}:
\begin{enumerate}
 \item[(i)] $\ker S_T=\ker T$, and therefore $S_T$ is injective $\Leftrightarrow$ $T$ is injective;
 \item[(ii)] $S_T$ is surjective $\Leftrightarrow$ $T$ is surjective;
  \item[(iii)] $S_T$ is invertible in the whole $\cH$ $\Leftrightarrow$ $T$ is invertible in the whole $\overline{\mathcal{D}(T)}$.
\end{enumerate}
\end{theorem}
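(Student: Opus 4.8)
\emph{The plan} is to reduce each of the three assertions to a single computation of the action of $S_T$ on a generic element of its domain, read against the orthogonal decomposition of $\cH$ forced by the gap. Since $0$ lies in the gap, the closed operator $\overline{S}$ is bounded below, hence injective with closed range, and $\ran(\overline{S})=(\ker S^*)^\perp$; together with $\ker S^*=\overline{\mathcal{D}(T)}\oplus(\ker S^*\cap\mathcal{D}(T)^\perp)$ this gives the orthogonal splitting
\[
 \cH\;=\;\ran(\overline{S})\,\oplus\,\overline{\mathcal{D}(T)}\,\oplus\,\big(\ker S^*\cap\mathcal{D}(T)^\perp\big)\,.
\]
The computational heart is the identity
\[
 S_T\big(f+S_\mathrm{D}^{-1}(Tv+w)+v\big)\;=\;\overline{S}f+Tv+w\,,
\]
valid for all admissible $f,v,w$: it holds because $S_T=S^*\upharpoonright\mathcal{D}(S_T)$, because $S_\mathrm{D}\subset S^*$ yields $S^*S_\mathrm{D}^{-1}(Tv+w)=Tv+w$, and because $v\in\ker S^*$. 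Crucially, in the right-hand side $\overline{S}f$, $Tv$, and $w$ sit precisely in the three mutually orthogonal subspaces above (using $\ran T\subseteq\overline{\mathcal{D}(T)}$).

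For \emph{(i)} I would set $S_Tu=0$: orthogonality of the three summands forces $\overline{S}f=0$, $Tv=0$, and $w=0$ separately. Injectivity of $\overline{S}$ gives $f=0$, so $u=v$ with $v\in\ker T$; conversely every $v\in\ker T$ produces $u:=v\in\mathcal{D}(S_T)$ with $S_Tv=0$. Hence $\ker S_T=\ker T$, and the equivalence ``$S_T$ injective $\Leftrightarrow$ $T$ injective'' is immediate.

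For \emph{(ii)} I would read off the range. As $f$ runs over $\mathcal{D}(\overline{S})$, $\overline{S}f$ exhausts $\ran(\overline{S})=(\ker S^*)^\perp$; as $w$ runs over $\ker S^*\cap\mathcal{D}(T)^\perp$ it exhausts that summand; and as $v$ runs over $\mathcal{D}(T)$, $Tv$ exhausts $\ran T\subseteq\overline{\mathcal{D}(T)}$. Since $f,v,w$ are independent and the target decomposes orthogonally, $\ran S_T=\cH$ holds if and only if $\ran T=\overline{\mathcal{D}(T)}$, i.e.\ exactly when the self-adjoint $T$ is surjective onto its ambient space $\overline{\mathcal{D}(T)}$. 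Then \emph{(iii)} is the conjunction of the two: $S_T$ is invertible on the whole $\cH$ iff it is both injective and surjective, which by (i)--(ii) occurs iff $T$ is injective and surjective on $\overline{\mathcal{D}(T)}$; bijectivity upgrades automatically to a bounded inverse because $S_T$ (being self-adjoint) and $T$ are closed, so the closed graph theorem applies.

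I expect the genuine content to lie entirely in the preliminary bookkeeping rather than in (i)--(iii) themselves: one must extract the closed-range/orthogonality picture $\cH=\ran(\overline{S})\oplus\overline{\mathcal{D}(T)}\oplus(\ker S^*\ominus\overline{\mathcal{D}(T)})$ from the gap condition, and verify the action formula term by term using $S_\mathrm{D}\subset S^*$. Once that is in place the three statements are short orthogonality arguments; the one subtlety to watch is keeping $\ran T$ distinct from its closure $\overline{\mathcal{D}(T)}$, so that ``$T$ surjective'' is correctly interpreted as $\ran T=\overline{\mathcal{D}(T)}$ and not merely as density of the range.
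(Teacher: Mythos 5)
Your proof is correct and follows essentially the same route as the paper's source for this statement: the paper states Theorem \ref{thm:invertibility-gen} without proof, citing \cite[Theorem 2.22]{GM-SelfAdj_book-2022}, and the argument there is precisely yours --- compute $S_T\big(f+S_\mathrm{D}^{-1}(Tv+w)+v\big)=\overline{S}f+Tv+w$ using $S_\mathrm{D}\subset S^*$ and $v\in\ker S^*$, then read injectivity and surjectivity off the orthogonal splitting $\cH=\ran\,\overline{S}\oplus\overline{\mathcal{D}(T)}\oplus\big(\ker S^*\ominus\overline{\mathcal{D}(T)}\big)$, which is available because the gap around zero makes $\overline{S}$ bounded below with closed range. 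Your two points of care --- distinguishing $\ran\,T$ from its closure so that surjectivity of $T$ means $\ran\,T=\overline{\mathcal{D}(T)}$, and upgrading bijectivity to bounded invertibility via closedness of the self-adjoint $S_T$ and $T$ --- are exactly the subtleties the standard treatment handles, so nothing is missing.
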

 
%  Another relevant link is among the negative spectra 
%  \begin{equation*}
% \begin{split}
%  \sigma_{\!-\!}(S_T)\;& :=\;\sigma(S_T)\cap (-\infty,0) \, , \\
%  \sigma_{\!-\!}(T)\;& :=\;\sigma(T)\cap (-\infty,0)
% \end{split}
% \end{equation*}
% of $S_T$ and $T$ respectively, when $S$ is a densely defined and lower semi-bounded operator with, non-restrictively,
% \begin{equation}\label{eq:mS}
%  \mathfrak{m}(S)\;:=\;\inf_{\substack{\psi\in\mathcal{D}(S) \\ \psi\neq 0}}\frac{\langle\psi,S\psi\rangle}{\;\|\psi\|^2}\;>\;-\infty\,,
% \end{equation}
% i.e., largest lower bound strictly above zero. In this case the role of the distinguished extension of Theorem \ref{thm:VB-representaton-theorem_Tversion2} is played by the Friedrichs extension $S_{\mathrm{F}}$.	
% 
% 
% 
% \begin{theorem}\label{thm:negative_spectrum_ST_T}
%  Under the assumptions of Theorem \ref{thm:VB-representaton-theorem_Tversion2} and \eqref{eq:mS}, and with $S_{\mathrm{D}}\equiv S_{\mathrm{F}}$, $\sigma_{\!-\!}(S_T)$ consists of a bounded below set of finite-rank eigenvalues of $S_T$ whose only possible accumulation point is $0$ \emph{if and only if}  $\sigma_{\!-\!}(T)$ has the same property. When this is the case, and $\lambda_1\leqslant\lambda_2\leqslant\cdots<0$ and $t_1\leqslant t_2\leqslant\cdots<0$ are the ordered sequences of negative eigenvalues (counted with multiplicity) of $S_T$ and of $T$ respectively, then $\lambda_k\leqslant t_k$ for $k=1,2,\dots$.
% \end{theorem}

 Now, as a first application of the general results above, consider a gapped symmetric operator around zero with \emph{unital deficiency index}, i.e.,
 \begin{equation}\label{eq:A1}
  \begin{array}{l}
   \textrm{$S$ is a densely defined and symmetric operator in $\cH$} \\
   \textrm{with a gap $(a,b)\ni 0$ in the sense of (1.1),} \\
   \textrm{\emph{equivalently},} \\
   \textrm{with distinguished $S_{\mathrm{D}}=S_{\mathrm{D}}^*\supset S$ such that $S_{\mathrm{D}}^{-1}\in\mathcal{B}(\cH)$.}
  \end{array}
 \end{equation}
 and 
 \begin{equation}\label{eq:Aunital}
  d(S)\,=\,1\,,
 \end{equation}
 where, under assumption \eqref{eq:A1},
 \begin{equation}\label{eq:def-def-ind}
  d(S)\;:=\;\dim\ker S^*\;=\;\dim\ker(S^*-z\mathbbm{1})\qquad \forall z\in\mathbb{C}\setminus\mathbb{R}\,.
 \end{equation}
 In this context it is meant that the gap $(a,b)$ may be finite (namely, finite $a$ and $b$) or semi-infinite (say, $a=-\infty$ and $b>0$ finite). Recall that the fact that the gap condition \eqref{eq:A1} ensures the validity of the identity in \eqref{eq:def-def-ind} for any complex non-real $z$ makes it legitimate to refer to $d(S)$ as \emph{the} deficiency index of $S$.

 Observe also that \eqref{eq:A1} implies $0\notin\sigma_{\mathrm{point}}(\overline{S})$, i.e., $\ker\overline{S}=\{0\}$: $\overline{S}$ is thus invertible on its range.

 An instructive special case of \eqref{eq:A1}, ubiquitous in applications, is that of a lower semi-bounded and densely defined $S$, meaning 
 \begin{equation}\label{eq:mS}
 \mathfrak{m}(S)\;:=\;\inf_{\substack{\psi\in\mathcal{D}(S) \\ \psi\neq 0}}\frac{\langle\psi,S\psi\rangle}{\;\|\psi\|^2}\;>\;-\infty\,,
\end{equation}
 with strictly positive lower bound, that is, $\mathfrak{m}(S)>0$.

% $A \in \mathcal{B} ( \mathcal{H} ) \setminus \{ \mathbb{O} \}$

 For generic $\lambda\in(a,b)$, set 
 \begin{equation}\label{eq:defSlambda}
  \mathcal{D}_\lambda\,:=\,\mathcal{D}(\overline{S})\dotplus\ker(S^*-\lambda\mathbbm{1})\,,\qquad S_\lambda\,:=\,S^*\upharpoonright\mathcal{D}_\lambda\,.
 \end{equation}

  Observe that $\mathcal{D}(S^*)=\mathcal{D}(S^*-\lambda\mathbbm{1})=\mathcal{D}((S-\lambda\mathbbm{1})^*)$ ($\lambda$ being real), and moreover having taken $\lambda$ in the gap guarantees that $S-\lambda\mathbbm{1}$ too has a distinguished self-adjoint extension $(S-\lambda\mathbbm{1})_{\mathrm{D}}$ which has everywhere defined and bounded inverse on $\cH$. Owing to \eqref{eq:domainadjoint} above one has
    \begin{equation}\label{eq:adjdomainsplitting}
   \mathcal{D}(S^*-\lambda\mathbbm{1})\,=\,\mathcal{D}(\overline{S}-\lambda\mathbbm{1})\dotplus (S-\lambda\mathbbm{1})_{\mathrm{D}}^{-1}\ker(S^*-\lambda\mathbbm{1})\dotplus\ker(S^*-\lambda\mathbbm{1})\,.
  \end{equation}
   Thus, since $\mathcal{D}(\overline{S})=\mathcal{D}(\overline{S}-\lambda\mathbbm{1})$, it is indeed confirmed that the sum in \eqref{eq:defSlambda} is direct.

   Actually, \eqref{eq:defSlambda} is a way of introducing the crucial operator $S_\lambda$ that is \emph{not} the convenient one needed for exporting our argument to the case of infinite deficiency case, and for this reason it will be replaced in a moment by a more informative, equivalent characterisation (Lemma \ref{lem:SlambdaSbeta} below). We simply started with \eqref{eq:defSlambda} for its direct connection with several analogous operators exploited in past analyses from the above-mentioned literature.

    \begin{lemma}\label{lem:Slambdaunitalsa}
  $S_\lambda$ is a self-adjoint extension of $S$.
 \end{lemma}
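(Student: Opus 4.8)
The plan is to reduce the claim to the Kre{\u\i}n--Vi\v{s}ik--Birman classification of Theorem \ref{thm:VB-representaton-theorem_Tversion2}, applied not to $S$ itself but to the shifted operator $S-\lambda\mathbbm{1}$. Since $\lambda\in(a,b)$ lies in the gap, $S-\lambda\mathbbm{1}$ is again a densely defined symmetric operator with zero in its gap, and it admits a distinguished self-adjoint extension $(S-\lambda\mathbbm{1})_{\mathrm D}$ with everywhere defined bounded inverse, exactly as recorded in \eqref{eq:adjdomainsplitting}. Thus Theorem \ref{thm:VB-representaton-theorem_Tversion2} applies verbatim to $S-\lambda\mathbbm{1}$, whose deficiency space is $\ker((S-\lambda\mathbbm{1})^*)=\ker(S^*-\lambda\mathbbm{1})$.

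The key observation is that $S_\lambda-\lambda\mathbbm{1}$ is precisely the extension of $S-\lambda\mathbbm{1}$ carried by the \emph{trivial} Birman parameter, namely the zero operator $T=0$ defined on the whole of $\ker(S^*-\lambda\mathbbm{1})$. Indeed, inserting $T=0$ and $\mathcal{D}(T)=\ker(S^*-\lambda\mathbbm{1})$ into \eqref{eq:ST-2} forces $Tv=0$ and, since $\ker(S^*-\lambda\mathbbm{1})\cap\mathcal{D}(T)^\perp=\{0\}$, also $w=0$; using $\mathcal{D}(\overline{S-\lambda\mathbbm{1}})=\mathcal{D}(\overline{S})$ one reads off
\[
\mathcal{D}\big((S-\lambda\mathbbm{1})_{0}\big)\;=\;\mathcal{D}(\overline{S})\dotplus\ker(S^*-\lambda\mathbbm{1})\;=\;\mathcal{D}_\lambda\,,
\]
with action $(S^*-\lambda\mathbbm{1})\upharpoonright\mathcal{D}_\lambda=S_\lambda-\lambda\mathbbm{1}$. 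Because the zero operator on a Hilbert subspace is manifestly self-adjoint, Theorem \ref{thm:VB-representaton-theorem_Tversion2} yields that $(S-\lambda\mathbbm{1})_{0}$ is a self-adjoint extension of $S-\lambda\mathbbm{1}$; adding back the bounded self-adjoint $\lambda\mathbbm{1}$ then shows that $S_\lambda$ is self-adjoint and, by construction, extends $S$.

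In this route there is essentially no obstacle beyond the bookkeeping of the shift: the only point requiring care is the domain identification above, i.e.\ checking that the zero parameter on the \emph{full} deficiency space reproduces exactly the direct complement $\ker(S^*-\lambda\mathbbm{1})$ with no spurious $(S-\lambda\mathbbm{1})_{\mathrm D}^{-1}$-component, together with the elementary equality $\mathcal{D}(\overline{S-\lambda\mathbbm{1}})=\mathcal{D}(\overline{S})$. It is worth noting that the argument never uses $d(S)=1$: the statement holds for any deficiency index, the unital hypothesis \eqref{eq:Aunital} being relevant only to the finer constructions that follow. Should one prefer to avoid the shifted classification altogether, symmetry of $S_\lambda$ can also be verified by hand: for $g=f_g+v_g$ and $h=f_h+v_h$ in $\mathcal{D}_\lambda$, with $f\in\mathcal{D}(\overline{S})$ and $v\in\ker(S^*-\lambda\mathbbm{1})$, the boundary form $\langle S^*g,h\rangle-\langle g,S^*h\rangle$ splits into the vanishing symmetric form of $\overline{S}$ plus cross terms of the shape $\langle(\overline{S}-\lambda\mathbbm{1})f,v\rangle$, each of which vanishes since $\ker(S^*-\lambda\mathbbm{1})\perp\ran(\overline{S}-\lambda\mathbbm{1})$, and maximality then comes again from the KVB correspondence.
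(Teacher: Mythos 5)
Your proposal is correct and follows essentially the same route as the paper: the paper likewise writes $S_\lambda=(S^*-\lambda\mathbbm{1})\upharpoonright\mathcal{D}_\lambda+\lambda\mathbbm{1}$ and identifies $(S^*-\lambda\mathbbm{1})\upharpoonright\mathcal{D}_\lambda$ as the self-adjoint extension of $S-\lambda\mathbbm{1}$ labelled, via Theorem \ref{thm:VB-representaton-theorem_Tversion2}, by the zero Birman parameter on the whole of $\ker(S^*-\lambda\mathbbm{1})$. Your explicit domain bookkeeping (the vanishing of the $S_\mathrm{D}^{-1}$-component and of $w$) and the observation that unitality of the deficiency index is never used are correct refinements of what the paper leaves implicit.
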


 \begin{proof}
  By construction, $S_\lambda$ is a densely defined extension of $S$. In fact, $S_\lambda$ is self-adjoint; for,
   \begin{equation}\label{eq:Slambdamp}
    S_\lambda\,=\,S^*\upharpoonright\mathcal{D}_\lambda\,=\,(S^*-\lambda\mathbbm{1})\upharpoonright\mathcal{D}_\lambda+\lambda\mathbbm{1}
   \end{equation}
   and $(S^*-\lambda\mathbbm{1})\upharpoonright\mathcal{D}_\lambda$ is a self-adjoint extension of $S-\lambda\mathbbm{1}$. The latter claim is checked by means of Theorem \ref{thm:VB-representaton-theorem_Tversion2} applied to $S-\lambda\mathbbm{1}$, choosing now the labelling operator $T$ to be the zero operator on the whole $\ker(S^*-\lambda\mathbbm{1})$.
 \end{proof}
   
   \begin{remark}
    When in particular the operator $S$ from \eqref{eq:A1} satisfies $\mathfrak{m}(S)>0$, the above construction for $S_\lambda$ requires $\lambda<\mathfrak{m}(S)$. In this case, $\mathfrak{m}(S-\lambda\mathbbm{1})=\mathfrak{m}(S)-\lambda>0$, and the distinguished extension in \eqref{eq:adjdomainsplitting} can be taken to be the \emph{Friedrichs extension} $(S-\lambda\mathbbm{1})_{\mathrm{F}}$ of $S-\lambda\mathbbm{1}$. With this choice, $(S^*-\lambda\mathbbm{1})\upharpoonright\mathcal{D}_\lambda$ is nothing but the \emph{Kre{\u\i}n-von Neumann extension} $(S-\lambda\mathbbm{1})_{\mathrm{N}}$ of $S-\lambda\mathbbm{1}$ (see, e.g., \cite[Theorem 2.10]{GM-SelfAdj_book-2022}), namely its unique smallest positive self-adjoint extension. 
   Observe that, whereas $(S-\lambda\mathbbm{1})_{\mathrm{F}}=S_{\mathrm{F}}-\lambda\mathbbm{1}$, in general $(S-\lambda\mathbbm{1})_{\mathrm{N}}\neq S_{\mathrm{N}}-\lambda\mathbbm{1}$.
   \end{remark}

   Based on the latter remark, even when going back to the general assumptions \eqref{eq:A1}-\eqref{eq:Aunital} it is informative to refer to $S_\lambda$ as the sum of $\lambda\mathbbm{1}$ plus the `extension of
   Kre{\u\i}n-von Neumann type' of $S-\lambda\mathbbm{1}$ -- see \eqref{eq:Slambdamp} above.

%    Now, it is clear from the definition of $S_\lambda$ that on any non-zero $u\in\ker(S^*-\lambda\mathbbm{1})\subset\mathcal{D}(S_\lambda)$ (such $u$ certainly existing, owing to \eqref{eq:Aunital}) one has $S_\lambda u = \lambda u$, meaning that $S_\lambda$ has eigenvalue $\lambda$.
%   
%    
   
   \begin{lemma}\label{lem:Slambdaev}
    Under the conditions \eqref{eq:A1}-\eqref{eq:Aunital} the gapped operator $S$ admits a self-adjoint extension with eigenvalue given by an arbitrary $\lambda$ in the gap.
   \end{lemma}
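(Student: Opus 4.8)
The plan is to exhibit the self-adjoint extension $S_\lambda$ of \eqref{eq:defSlambda} as the desired operator and to verify directly that $\lambda$ is one of its eigenvalues. The whole point of the construction of $S_\lambda$ is that the deficiency subspace $\ker(S^*-\lambda\mathbbm{1})$ has been incorporated into $\mathcal{D}(S_\lambda)$, and on that subspace $S^*$ acts by multiplication by $\lambda$; the eigenvalue therefore comes essentially for free, once one checks that the relevant kernel is non-trivial.

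First I would invoke Lemma \ref{lem:Slambdaunitalsa}, which already guarantees that $S_\lambda$ is a self-adjoint extension of $S$, so no further work is needed to produce a legitimate self-adjoint realisation. Next, since $\lambda$ lies in the gap $(a,b)$, the deficiency index is constant there, so that $\dim\ker(S^*-\lambda\mathbbm{1})=d(S)=1$ by the unitality assumption \eqref{eq:Aunital} together with \eqref{eq:def-def-ind}. In particular this kernel is one-dimensional, hence it contains a non-zero vector $u$.

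Finally, by construction $u\in\ker(S^*-\lambda\mathbbm{1})\subset\mathcal{D}_\lambda=\mathcal{D}(S_\lambda)$, and since $S_\lambda$ is a restriction of $S^*$ one has $S_\lambda u=S^*u=\lambda u$ with $u\neq 0$. This shows $\lambda\in\sigma_{\mathrm{point}}(S_\lambda)$, which is exactly the assertion.

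I expect no genuinely hard step here: the content is entirely front-loaded into the construction \eqref{eq:defSlambda} and into its self-adjointness established in Lemma \ref{lem:Slambdaunitalsa}. The only point deserving a word is the non-triviality of $\ker(S^*-\lambda\mathbbm{1})$, which rests on the constancy of the deficiency index \emph{throughout} the gap rather than merely at complex non-real spectral points; this is the one place where the gap hypothesis \eqref{eq:A1} (as opposed to mere symmetry of $S$) is genuinely used.
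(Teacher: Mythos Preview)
Your proof is correct and follows essentially the same approach as the paper: exhibit $S_\lambda$, invoke Lemma~\ref{lem:Slambdaunitalsa} for self-adjointness, pick a non-zero $u\in\ker(S^*-\lambda\mathbbm{1})$, and compute $S_\lambda u=S^*u=\lambda u$. You are, if anything, slightly more careful than the paper in explicitly noting that the non-triviality of $\ker(S^*-\lambda\mathbbm{1})$ for \emph{arbitrary} $\lambda$ in the gap relies on the constancy of the deficiency index throughout the gap, not merely at $0$ or at non-real points.
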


   \begin{proof}
    For any such $\lambda$, one such operator is precisely $S_\lambda$. It is indeed a self-adjoint extension of $S$ (Lemma \ref{lem:Slambdaunitalsa}) and moreover \eqref{eq:Slambdamp} shows that on any non-zero $u\in\ker(S^*-\lambda\mathbbm{1})\subset\mathcal{D}(S_\lambda)$ (such $u$ certainly existing, owing to \eqref{eq:Aunital}) one has $S_\lambda u = \lambda u$.    
   \end{proof}

   For arbitrary $\lambda$ in the spectral gap of $S$, $S_\lambda$ is a self-adjoint extension of $S$ with eigenvalue $\lambda$. As such, it \emph{must} correspond to an extension parameter $T$ in the sense of the general classification of Theorem \ref{thm:VB-representaton-theorem_Tversion2}. Since $S$ has by assumption unital deficiency index, $T$ is in this case the self-adjoint operator acting in the one-dimensional space $\ker(S^*-\lambda\mathbbm{1})$ as the multiplication by a real number $\beta$. Thus, in the notation therein, $S_\lambda$ must have the form $S_T$ for some $T$ of that sort, for which it is convenient to use the ad hoc notation $S^{[\beta]}$ (meaning: the extension $S_T$ where $T$ is the multiplication by $\beta$). Explicitly, specialising Theorem \ref{thm:VB-representaton-theorem_Tversion2} for unital deficiency index,
   \begin{equation}\label{eq:domainSbeta}
    \mathcal{D}(S^{[\beta]})\;=\;\;\left\{f+S_\mathrm{D}^{-1}(\beta u)+u\left|
\begin{array}{c}
f\in\mathcal{D}(\overline{S})\,, \\
u\in\ker S^*
\end{array}\!\!
\right.\right\}
   \end{equation}
   The choice $\beta=\infty$ is tacitly included, and corresponds to the Friedrichs extension $S_\mathrm{F}$ of $S$.

   \begin{proposition}\label{lem:SlambdaSbeta}
    $S_\lambda=S^{[\beta]}$ for
    \begin{equation}\label{eq:beta-lambda-1}
     \beta\,=\,\lambda\,\big\langle w_0,S_{\mathrm{D}}(S_{\mathrm{D}}-\lambda\mathbbm{1})^{-1}w_0\big\rangle\,,
    \end{equation}
    where $w_0$ is a unit vector (i.e., $\|w_0\|=1$) spanning $\ker S^*$ (formula \eqref{eq:beta-lambda-1} is clearly insensitive of the phase of $w_0$).
   \end{proposition}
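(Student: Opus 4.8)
The plan is to pin down the single Birman parameter $\beta$ by producing the eigenvector of $S_\lambda$ for the eigenvalue $\lambda$ explicitly in terms of the generator $w_0$ of $\ker S^*$, and then reading off how that eigenvector sits inside the decomposition \eqref{eq:domainadjoint}.

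First I would identify $\ker(S^*-\lambda\mathbbm{1})$. Since $\lambda$ lies in the gap, $(S_{\mathrm{D}}-\lambda\mathbbm{1})^{-1}\in\mathcal{B}(\cH)$, and I claim the vector
\[
 u_\lambda\;:=\;w_0+\lambda(S_{\mathrm{D}}-\lambda\mathbbm{1})^{-1}w_0\;=\;S_{\mathrm{D}}(S_{\mathrm{D}}-\lambda\mathbbm{1})^{-1}w_0
\]
spans it. Indeed $(S_{\mathrm{D}}-\lambda\mathbbm{1})^{-1}w_0\in\mathcal{D}(S_{\mathrm{D}})\subset\mathcal{D}(S^*)$, on which $S^*$ acts as $S_{\mathrm{D}}$, so a one-line computation using $S^*w_0=0$ gives $S^*u_\lambda=\lambda u_\lambda$; moreover $u_\lambda\neq0$ because $S_{\mathrm{D}}$ and $(S_{\mathrm{D}}-\lambda\mathbbm{1})^{-1}$ are injective, and $u_\lambda\notin\mathcal{D}(\overline{S})$ since the sum defining $\mathcal{D}_\lambda$ in \eqref{eq:defSlambda} is direct. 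Hence $\mathcal{D}(S_\lambda)=\mathcal{D}(\overline{S})\dotplus\C u_\lambda$.

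Next I would locate $u_\lambda$ within \eqref{eq:domainadjoint}. Writing $g:=u_\lambda-w_0=\lambda(S_{\mathrm{D}}-\lambda\mathbbm{1})^{-1}w_0\in\mathcal{D}(S_{\mathrm{D}})$ and using the decomposition $\mathcal{D}(S_{\mathrm{D}})=\mathcal{D}(\overline{S})\dotplus S_{\mathrm{D}}^{-1}\ker S^*$ (the part of \eqref{eq:domainadjoint} making up $\mathcal{D}(S_{\mathrm{D}})$, which follows from $\cH=\mathrm{ran}(\overline{S})\oplus\ker S^*$ and $S_{\mathrm{D}}^{-1}\mathrm{ran}(\overline{S})=\mathcal{D}(\overline{S})$), I decompose $g=f_0+c_1\,S_{\mathrm{D}}^{-1}w_0$ with $f_0\in\mathcal{D}(\overline{S})$ and a scalar $c_1$. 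The coefficient $c_1$ is extracted by applying $S_{\mathrm{D}}$ and pairing against $w_0$: since $S_{\mathrm{D}}f_0=\overline{S}f_0\in\mathrm{ran}(\overline{S})=(\ker S^*)^\perp$ is orthogonal to $w_0$ and $\|w_0\|=1$, one gets $c_1=\langle w_0,S_{\mathrm{D}}g\rangle=\lambda\,\langle w_0,S_{\mathrm{D}}(S_{\mathrm{D}}-\lambda\mathbbm{1})^{-1}w_0\rangle$, which is exactly the quantity $\beta$ in \eqref{eq:beta-lambda-1} (and is real, as $S_{\mathrm{D}}(S_{\mathrm{D}}-\lambda\mathbbm{1})^{-1}=\mathbbm{1}+\lambda(S_{\mathrm{D}}-\lambda\mathbbm{1})^{-1}$ is bounded self-adjoint).

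Finally I would conclude. The previous step gives $u_\lambda=f_0+\beta\,S_{\mathrm{D}}^{-1}w_0+w_0$, which is precisely of the form displayed in \eqref{eq:domainSbeta} (with $f=f_0$ and $u=w_0$); hence $u_\lambda\in\mathcal{D}(S^{[\beta]})$. Since also $\mathcal{D}(\overline{S})\subset\mathcal{D}(S^{[\beta]})$, it follows that $\mathcal{D}(S_\lambda)=\mathcal{D}(\overline{S})\dotplus\C u_\lambda\subseteq\mathcal{D}(S^{[\beta]})$, so that $S_\lambda\subseteq S^{[\beta]}$ as restrictions of the common operator $S^*$; both being self-adjoint, maximality forces $S_\lambda=S^{[\beta]}$. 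The only genuinely delicate point is the clean extraction of the coefficient $c_1$ — i.e.\ recognising $\mathrm{ran}(\overline{S})=(\ker S^*)^\perp$ (closedness of the range following from $0$ lying in the gap) and that $S_{\mathrm{D}}$ sends the $\mathcal{D}(\overline{S})$-part of $g$ into $(\ker S^*)^\perp$; everything else is bookkeeping within \eqref{eq:domainadjoint}.
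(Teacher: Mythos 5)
Your proof is correct, and it runs in the opposite direction to the paper's own argument. The paper first uses Lemma \ref{lem:Slambdaunitalsa} together with Theorem \ref{thm:VB-representaton-theorem_Tversion2} to know a priori that $S_\lambda=S^{[\beta]}$ for a unique $\beta\in\mathbb{R}\cup\{\infty\}$, and then computes $\beta$ by taking a \emph{generic} element $\phi+v\in\mathcal{D}_\lambda$, equating it with the Birman form $f+S_{\mathrm{D}}^{-1}(\beta u)+u$, and solving: applying $S^*$ and using $\mathrm{ran}\,\overline{S}\perp\ker S^*$ on the decomposition $v=\overline{S}x+z$ yields $f=\phi+\lambda x$, $\beta u=\lambda z$, $u=(S_{\mathrm{D}}-\lambda\mathbbm{1})S_{\mathrm{D}}^{-1}v$, after which $\beta$ is extracted by pairing against $z$. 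You instead construct the single deficiency vector $u_\lambda=S_{\mathrm{D}}(S_{\mathrm{D}}-\lambda\mathbbm{1})^{-1}w_0$ -- precisely the inverse of the paper's relation \eqref{lemma-iv} -- decompose that one vector in the splitting \eqref{eq:domainadjoint} to read off $\beta=\lambda\langle w_0,S_{\mathrm{D}}(S_{\mathrm{D}}-\lambda\mathbbm{1})^{-1}w_0\rangle$, and close with the inclusion-plus-maximality argument for self-adjoint operators, which replaces the paper's a priori uniqueness of $\beta$. One small point your route leans on: the claim that $u_\lambda$ \emph{spans} $\ker(S^*-\lambda\mathbbm{1})$, i.e.\ that $\dim\ker(S^*-\lambda\mathbbm{1})=1$, needs the constancy of the deficiency index across $\mathbb{C}^+\cup J$, which the paper records in Section \ref{sec:intro}; so the step is legitimate but deserves an explicit citation. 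Notably, your forward verification -- defining the $\ker S^*$-vector by the resolvent formula and checking that the candidate eigenvector lands in the prescribed extension domain -- is exactly the mechanism the paper itself deploys later for infinite deficiency index (compare \eqref{eq:def_un}--\eqref{eq:vnfromun} and the proof of Theorem \ref{thm:essspecinfdefind}), so your argument is in effect the Section \ref{sec:geninfind} strategy specialised to unital index. What the paper's generic-element computation buys in exchange is the explicit dictionary $f=\phi+\lambda x$, $u=(S_{\mathrm{D}}-\lambda\mathbbm{1})S_{\mathrm{D}}^{-1}v$ for \emph{every} element of $\mathcal{D}_\lambda$, which feeds the subsequent Remark \ref{1-remark} and the worked example; your version is shorter, avoids the generic bookkeeping, and has the merit of making the reality of $\beta$ explicit via the self-adjointness of $\mathbbm{1}+\lambda(S_{\mathrm{D}}-\lambda\mathbbm{1})^{-1}$.
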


   \begin{proof}
   As argued above, for given $S$ and $\lambda$ (and, a priori, for given $S_{\mathrm{D}}$ and $(S-\lambda\mathbbm{1})_{\mathrm{D}}$ as well), there exists one and only one $\beta\in\mathbb{R}\cup\{\infty\}$, depending on such data, for which $S_\lambda=S^{[\beta]}$, or, equivalently, $\mathcal{D}_\lambda=\mathcal{D}(S^{[\beta]})$. 
   This means, in view of \eqref{eq:domainSbeta}, that for arbitrary $\phi\in\mathcal{D}(\overline{S})$ and $v\in\ker(S^*-\lambda\mathbbm{1})$, the element $\phi+v\in\mathcal{D}_\lambda$ can be re-written as
   \[\tag{i}\label{lemma-i}
    \phi+v\;=\;f+S_{\mathrm{D}}^{-1} (\beta u) + u
   \]
   for uniquely determined $f\equiv f_{\phi,v,\lambda}\in\mathcal{D}(\overline{S})$ and $u\equiv u_{\phi,v,\lambda}\in\ker S^*$.

  Now, decompose
  \[\tag{ii}\label{lemma-ii}
   v\,=\,\overline{S}x+z
  \]
 with $x\in\mathcal{D}(\overline{S})$ and $z\in\ker S^*$, uniquely determined via the orthogonal direct sum $\cH=\,\ran\,\overline{S}\oplus\ker S^*$ (recall that under the current assumptions, $\overline{\mathrm{ran}\,S}=\ran\,\overline{S}$).

  Applying $S^*$ on both sides of \eqref{lemma-i} yields $\overline{S}\phi+\lambda \overline{S}x+\lambda z=\overline{S}f+\beta u$, whence
  \[
   \overline{S}\phi+\lambda \overline{S}x-\overline{S}f\,=\,\beta u-\lambda z\,.
  \]
  As the l.h.s.~above belongs to $\mathrm{ran}\,{\overline{S}}$ and the r.h.s.~belongs to $\ker S^*$, necessarily both sides vanish and
%   \begin{eqnarray*}
%    f\!&=&\!\phi+\lambda z\,, \tag{iv}\label{lemma-iv} \\
%    \beta u\!&=&\!\lambda v_0\,. \tag{v}\label{lemma-v}
%   \end{eqnarray*}
  \[\tag{iii}\label{lemma-iii}
   f\,=\,\phi+\lambda x\,,\qquad \beta u\,=\,\lambda z\,.
  \]
 By means of \eqref{lemma-iii} above, \eqref{lemma-i} is re-written as
 \[
  \begin{split}
   \phi+v\,&=\,\phi+\lambda x+S_{\mathrm{D}}^{-1} (\lambda z) + u \\
   &=\,\phi+\lambda S_{\mathrm{D}}^{-1} (\overline{S}x+z)+u \\
   &=\,\phi+\lambda S_{\mathrm{D}}^{-1}v+u\,,
  \end{split}
 \]
  yielding
    \[\tag{iv}\label{lemma-iv}
   u\,=\,(\mathbbm{1}-\lambda S_{\mathrm{D}}^{-1})v\,=\,(S_{\mathrm{D}}-\lambda\mathbbm{1})S_{\mathrm{D}}^{-1}v\,.
  \]
  
  Formulas \eqref{lemma-iii} and \eqref{lemma-iv} provide the expression for $f\in\mathcal{D}(\overline{S})$ and $u\in\ker S^*$, for given $\phi+v\in\mathcal{D}_\lambda$, satisfying \eqref{lemma-i}. In order to find the dependence of $\beta$ on $\lambda$ it is convenient to combine \eqref{lemma-iv} with the second of \eqref{lemma-iii}: this gives
  \[
   \lambda z\,=\,\beta u\,=\,\beta (S_{\mathrm{D}}-\lambda\mathbbm{1})S_{\mathrm{D}}^{-1}v\,,
  \]
  whence (using also \eqref{lemma-ii})
  \[
   \lambda (S_{\mathrm{D}}-\lambda\mathbbm{1})^{-1}z\,=\,\beta S_{\mathrm{D}}^{-1}v\,=\,\beta S_{\mathrm{D}}^{-1}(\overline{S}x+z)\,.
  \]
  Recall, indeed, that since $\lambda$ belongs to the gap of $S$ then $S_{\mathrm{D}}-\lambda\mathbbm{1}$ has everywhere defined and bounded inverse in $\cH$.
  Applying $S^*=(S^*-\lambda\mathbbm{1})+\lambda\mathbbm{1}$ to the latter identity yields
  \[
   \lambda z +\lambda^2(S_{\mathrm{D}}-\lambda\mathbbm{1})^{-1}z\,=\,\beta \overline{S}x+\beta z\,.
  \]
  Taking the scalar product of each side with $z$ itself gives
  \[
   (\lambda-\beta)\|z\|^2+\lambda^2\big\langle z, (S_{\mathrm{D}}-\lambda\mathbbm{1})^{-1}z \big\rangle\,=\,0\,,
  \]
  as $z\perp \overline{S}x$, whence (in the non-trivial case $z\neq 0$)
     \[\tag{v}\label{lemma-v}
   \beta\,=\,\lambda + \lambda^2\,\frac{\,\big\langle z, (S_{\mathrm{D}}-\lambda\mathbbm{1})^{-1}z \big\rangle\,}{\|z\|^2}\,=\,\lambda\,\frac{\,\big\langle z, S_{\mathrm{D}}(S_{\mathrm{D}}-\lambda\mathbbm{1})^{-1}z \big\rangle\,}{\|z\|^2}\,.
  \]
  
  Clearly, \eqref{lemma-v} above is the desired formula \eqref{eq:beta-lambda-1}.
   \end{proof}

   \begin{remark}\label{1-remark}~
   \begin{enumerate}
    \item $S_\lambda$ is non-invertible when $\lambda=0$, as it admits the eigenvalue zero. Through the identification $S_\lambda=S^{[\lambda]}$ (Proposition \ref{lem:SlambdaSbeta}) this is consistent with the fact (Theorem \ref{thm:invertibility-gen}) that $S^{[\beta]}$ loses its invertibility precisely when $\beta=0$.
    \item In Proposition \ref{lem:SlambdaSbeta}'s proof above, the implication $\lambda=0\Rightarrow\beta=0$ is already deduced from conditions \eqref{lemma-i}-\eqref{lemma-iii}. Indeed, if $\lambda=0$, then $\beta u=\lambda z=0$ 
    irrespective of the choice of $\phi+v\in\mathcal{D}_\lambda$; the option $\beta\neq 0$ would imply that all the $u\equiv u_{\phi,v,\lambda}$'s must vanish, whereby $\phi+v=f+S_{\mathrm{D}}^{-1} (\beta u) + u=f\in\mathcal{D}(\overline{S})$, i.e., $\mathcal{D}_\lambda\subset\mathcal{D}(\overline{S})$, a contradiction.
   \end{enumerate}
   \end{remark}

   \begin{example}
    It is instructive to visualise the main identities of Proposition \ref{lem:SlambdaSbeta}'s proof with a concrete example. In $\cH:=L^2(\mathbb{R}^+)$ consider
    \[
     \mathcal{D}(S)\,:=\,C^\infty_c(\mathbb{R}^+)\,,\qquad Sf\,:=\,-f''+f\,.
    \]
   Such $S$ is a densely defined and lower semi-bounded symmetric operator, with $\mathfrak{m}(S)=1$, thus with gap $(-\infty,1)$. $S$ then admits the Friedrichs extension, $S_{\mathrm{F}}$, which is the distinguished extension $S_{\mathrm{D}}$ for this example. It is standard to argue (see, e.g., \cite[Section 7.1]{GMO-KVB2017}) that $S$ has unital deficiency index and
   \[
    \begin{split}
     \mathcal{D}(\overline{S})\,&=\,H^2_0(\mathbb{R}^+)\,, \\
     \mathcal{D}(S_{\mathrm{F}})\,&=\,H^2(\mathbb{R}^+)\cap H^1_0(\mathbb{R}^+)\,, \\
     \mathcal{D}(S^*)\,&=\,H^2(\mathbb{R}^+)\,, \\
     \ker S^*\,&=\,\mathrm{span}\{e^{-t}\}\,,
    \end{split}
   \]
   all operators $\overline{S}$, $S_{\mathrm{F}}$, $S^*$ acting as $f\mapsto -f''+f$. Thus, the boundary condition for any $f\in\mathcal{D}(\overline{S})$ is $f(0)=f'(0)=0$, and the boundary condition for any $f\in\mathcal{D}(S_{\mathrm{F}})$ is $f(0)=0$. For $\lambda$ in the gap of $S$, namely $\lambda<1$,
   \[
    \ker(S^*-\lambda\mathbbm{1})\,=\,\mathrm{span}\big\{e^{-t\sqrt{1-\lambda}}\big\}\,.
   \]
  Consider $v=p \,e^{-t\sqrt{1-\lambda}}\in \ker(S^*-\lambda\mathbbm{1})$ for arbitrary $p\in\mathbb{C}$. In the decomposition 
  \[
   v\,=\,\overline{S}x+z\,, \qquad x\in\mathcal{D}(\overline{S})\,,\;z\in\ker S^*\,,
  \]
  necessarily $z=q\,e^{-t}$ for some $q\in\mathbb{C}$, and the identity $\langle z,v\rangle_{L^2}=\|z\|^2_{L^2}$ yields
  \[
   q\,=\,p\,\frac{2}{\,\sqrt{1-\lambda}+1\,}\,.
  \]
   Let us now identify the element $u=(S_{\mathrm{F}}-\lambda\mathbbm{1})S_{\mathrm{F}}^{-1}v\in\ker S^*$. With  $w:=S_{\mathrm{F}}^{-1}v$, one writes $u=(S_{\mathrm{F}}-\lambda\mathbbm{1})w$. Here $w$ is determined by the ODE $v=S^*w=-w''+w$ with boundary condition $w(0)=0$: easy computations show that the only $L^2$-solution to this problem is $w=-\frac{p}{\lambda}(e^{-t\sqrt{1-\lambda}}-e^{-t})$. In turn,
   \[
    u\,=\,(S_{\mathrm{F}}-\lambda\mathbbm{1})w\,=\,-w''+(1-\lambda)w\,=\,p\,e^{-t}\,.
   \]
   Since one must have $\beta u=\lambda z$, the above findings for $u$ and $z$ yield
   \[
    \beta\,p\,e^{-t}\,=\,\lambda\,q\,e^{-t}\,,
   \]
   whence finally
   \[
    \beta\,=\,\lambda\,\frac{q}{p}\,=\,\frac{2\lambda}{\,\sqrt{1-\lambda}+1\,}\,.
   \]
   This is the actual dependence $\beta\equiv\beta(\lambda)$. Last, let us show that such expression is precisely the one given by \eqref{eq:beta-lambda-1}. It is non-restrictive to assume $\lambda\neq 0$ henceforth, as the implication $\lambda=0\Rightarrow\beta=0$ is already guaranteed by general arguments (Remark \ref{1-remark}(ii)).
   One has 
   \[
   \begin{split}
    \|z\|_{L^2}^2\,&=\,\frac{\:|q|^2}{2}\,, \\
        \big\langle z, S_{\mathrm{F}}(S_{\mathrm{F}}-\lambda\mathbbm{1})^{-1}z \big\rangle_{L^2}\,&=\,|q|^2\langle e^{-t},S_{\mathrm{F}}(S_{\mathrm{F}}-\lambda\mathbbm{1})^{-1}e^{-t}\rangle_{L^2}\,.
   \end{split}
   \]
% 
%    \[
%     \|v_0\|_{L^2}^2\,=\,\frac{\:|q|^2}{2}\,,\qquad \big\langle v_0, (S_{\mathrm{F}}-\lambda\mathbbm{1})^{-1}v_0 \big\rangle_{L^2}\,=\,|q|^2\langle e^{-x},(S_{\mathrm{F}}-\lambda\mathbbm{1})^{-1}e^{-x}\rangle_{L^2}\,.
%    \]
   The element $h:=(S_{\mathrm{F}}-\lambda\mathbbm{1})^{-1}e^{-t}\in\mathcal{D}(S_{\mathrm{F}})$ is the $L^2$-solution to the problem $-h''+(1-\lambda)h=e^{-t}$, $h(0)=0$, which is easily found to be $h=\frac{1}{\lambda}(e^{-t\sqrt{1-\lambda}}-e^{-t})$. Then, $S_{\mathrm{F}}h=e^{-t\sqrt{1-\lambda}}$, and 
   \[
    \begin{split}
     \beta\,&=\,\lambda\,\frac{\,\big\langle z, S_{\mathrm{F}}(S_{\mathrm{F}}-\lambda\mathbbm{1})^{-1}z \big\rangle_{L^2}\,}{\;\|z\|_{L^2}^2} \\
     &=\,2\lambda\,\langle e^{-t}, S_{\mathrm{F}}h \rangle_{L^2}\,=\,2\lambda\,\langle e^{-t},e^{-t\sqrt{1-\lambda}}\rangle_{L^2} \,=\,\frac{2\lambda}{\,\sqrt{1-\lambda}+1\,}\,,
    \end{split}
   \]
% 
%    \[
%     \langle e^{-x},(S_{\mathrm{F}}-\lambda\mathbbm{1})^{-1}e^{-x}\rangle\,=\,\langle e^{-x},h\rangle_{L^2}\,=\,\frac{1}{\lambda}\Big(\frac{1}{\,\sqrt{1-\lambda}+1\,}-\frac{1}{2}\Big)\,.
%    \]
%    Thus,
%    \[
%     \begin{split}
%      \beta\,&=\,\lambda + \lambda^2\,\frac{\,\big\langle v_0, (S_{\mathrm{D}}-\lambda\mathbbm{1})^{-1}v_0 \big\rangle\,}{\|v_0\|^2} \\
%      &=\,\lambda + \lambda^2\,\frac{2}{\lambda}\Big(\frac{1}{\,\sqrt{1-\lambda}+1\,}-\frac{1}{2}\Big)\,=\,\frac{2\lambda}{\,\sqrt{1-\lambda}+1\,}\,,
%     \end{split}
%    \]
   as found before. It is worth observing that $\lambda\mapsto\beta(\lambda)$ is strictly monotone increasing, with $\beta(\lambda)\xrightarrow{\;\lambda\to-\infty\;}-\infty$ and $\beta(\lambda)\xrightarrow{\;\lambda\to 1^-\;}+\infty$. Thus, the collection of the $S_\lambda$'s exhaust the whole family of self-adjoint extensions of $S$, as $\lambda$ runs in the gap of $S$.
   \end{example}

   The above line of reasoning places under the scope of the general classification scheme of Theorem \ref{thm:VB-representaton-theorem_Tversion2} an operator, the above $S_\lambda$, whose construction already appeared, with ad hoc or implicit arguments, in \cite{Brasche-Neidhardt-Weidmann-1992,Albeverio-Brasche-Neidhardt-JFA1998,Brasche2001_ContMath2004,Albe-Dudkin-Konst-Koshm-2007}.

   This is instructive per se, and also paves the way for the analysis of the actual case of interest, a gapped operator $S$ with \emph{infinite} deficiency index, which is the object of the next Section.

   In fact, there is already one distinguished case with infinite deficiency index, crucially present in many applications (a recent topical example is \cite{GMP-Grushin2-2020,GM-Grushin3-2020}), which can be treated by exploiting the preparatory arguments presented so far: symmetric operators defined by an infinite orthogonal sum of gapped operators with unital deficiency index.

   \begin{theorem}\label{thm:infdirsum}
    Let $\cH=\bigoplus_{n\in\mathbb{N}}\cH_n$ be a (countably infinite) orthogonal direct sum of Hilbert spaces, and let $S=\bigoplus_{n\in\mathbb{N}}S_n$ be the operator acting in $\cH$ given by the orthogonal sum of operators $S_n$'s, each of which is a densely defined, gapped, symmetric operator acting in the corresponding Hilbert space $\cH_n$ and having unital deficiency index. Assume furthermore that all gaps have a common intersection $(a,b)\subset\mathbb{R}$ which, without loss of generality, is assumed to contain $0$. Then:
    \begin{enumerate}
     \item[(i)] $S$ is a densely defined, gapped, symmetric operator with infinite deficiency index and admitting self-adjoint extensions;
     \item[(ii)] for any finite or countably infinite collection $(\lambda_k)_k$ in $(a,b)$ there is a self-adjoint extension of $S$ having the $\lambda_k$'s as eigenvalues; if $\lambda$ is any of such values, then its multiplicity as an eigenvalue equals the number of the $\lambda_k$'s being equal to $\lambda$;
     \item[(iii)] for any arbitrary closed subset $K\subset[a,b]$ there is a self-adjoint extension of $S$ whose essential spectrum contains $K$.
    \end{enumerate}
   \end{theorem}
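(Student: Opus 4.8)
The plan is to realise every extension required by (i)--(iii) as an orthogonal direct sum of the one-dimensional building blocks produced by Section~\ref{sec:preparation-unital}, exploiting that for a direct sum $\bigoplus_n A_n$ of self-adjoint operators (with the $\ell^2$-domain $\{(f_n):f_n\in\mathcal{D}(A_n),\sum_n\|A_nf_n\|^2<\infty\}$) both the kernel and the essential spectrum are completely transparent.

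For (i), I would first record that an orthogonal direct sum of densely defined symmetric operators is again densely defined and symmetric on the algebraic direct sum of the $\mathcal{D}(S_n)$'s, and that the gap condition \eqref{eq:gapcondition} for the common interval $(a,b)$ passes to $S$ by summing the defining inequalities over $n$ (squaring, in the finite-gap case: $\|(S-\tfrac{a+b}{2}\mathbbm{1})f\|^2=\sum_n\|(S_n-\tfrac{a+b}{2}\mathbbm{1})f_n\|^2\geqslant(\tfrac{b-a}{2})^2\|f\|^2$). That $S$ admits self-adjoint extensions, and that assumption \eqref{eq:A1} holds for it, is witnessed by $S_{\mathrm{D}}:=\bigoplus_n S_{n,\mathrm{D}}$, whose inverse is everywhere defined and bounded, uniformly in $n$, because $0$ sits at distance at least $\min\{-a,b\}>0$ from every $\sigma(S_{n,\mathrm{D}})$. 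Finally $\ker S^*=\bigoplus_n\ker S_n^*$ is a countable orthogonal sum of one-dimensional spaces, so $d(S)=\infty$, closing (i).

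For (ii), I would invoke Lemma~\ref{lem:Slambdaev}: for every $\lambda$ in the gap each $S_n$ carries the self-adjoint extension $S_{n,\lambda}$ with eigenvalue $\lambda$ and eigenvector spanning $\ker(S_n^*-\lambda\mathbbm{1})$. Given $(\lambda_k)_k$, in the countably infinite case I set $\widetilde{S}:=\bigoplus_n S_{n,\lambda_n}$; in the finite case of length $M$ I put $\mu_n:=\lambda_n$ for $n\leqslant M$ and take the distinguished extension $S_{n,\mathrm{D}}$ (which has empty spectrum in the open gap) for $n>M$. Then $\widetilde{S}$ is a self-adjoint extension of $S$ and $\ker(\widetilde{S}-\lambda\mathbbm{1})=\bigoplus_n\ker(S_{n,\mu_n}-\lambda\mathbbm{1})$. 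To pin the multiplicity down exactly, the summand with $\mu_n=\lambda$ contributes precisely the one-dimensional $\ker(S_n^*-\lambda\mathbbm{1})$ (here one checks directly that an eigenvector $f+u$, $f\in\mathcal{D}(\overline{S_n})$, $u\in\ker(S_n^*-\lambda\mathbbm{1})$, forces $(\overline{S_n}-\lambda\mathbbm{1})f=0$, hence $f=0$ since $\overline{S_n}-\lambda\mathbbm{1}$ is injective on the gap), while each summand with $\mu_n\neq\lambda$ must contribute nothing. The latter is exactly the assertion that a unital-deficiency extension has at most one eigenvalue inside $(a,b)$: via Proposition~\ref{lem:SlambdaSbeta} one has $S_{n,\mu_n}=S_n^{[\beta(\mu_n)]}$, and a gap eigenvalue $\lambda$ would force $\ker(S_n^*-\lambda\mathbbm{1})\subset\mathcal{D}(S_n^{[\beta(\mu_n)]})$, i.e.\ $\beta(\lambda)=\beta(\mu_n)$, whence $\lambda=\mu_n$ by strict monotonicity of $\lambda\mapsto\beta(\lambda)$ on the gap. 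Thus $\dim\ker(\widetilde{S}-\lambda\mathbbm{1})$ equals the number of $\lambda_k$'s equal to $\lambda$, as claimed.

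For (iii), let $\{\lambda_k\}_{k\in\mathbb{N}}$ be a countable dense subset of the closed set $K\subset[a,b]$. For each $k$ choose $\mu_{k,j}\in(a,b)$, $j\in\mathbb{N}$, with $\mu_{k,j}\to\lambda_k$ (taking $\mu_{k,j}\equiv\lambda_k$ when $\lambda_k\in(a,b)$, and a genuine approximating sequence when $\lambda_k$ is an endpoint of the gap). Enumerating all $\mu_{k,j}$ as $(\mu_n)_{n\in\mathbb{N}}$ through a bijection $\mathbb{N}\cong\mathbb{N}^2$, I set $\widetilde{S}:=\bigoplus_n S_{n,\mu_n}$. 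For fixed $k$ the normalised eigenvectors sitting in the distinct summands that realise the values $\mu_{k,j}$ form an orthonormal, hence weakly null, sequence on which $\|(\widetilde{S}-\lambda_k\mathbbm{1})\,\cdot\,\|\to0$; this singular Weyl sequence places $\lambda_k\in\sigma_{\mathrm{ess}}(\widetilde{S})$. Since $\sigma_{\mathrm{ess}}(\widetilde{S})$ is closed and $\{\lambda_k\}$ is dense in $K$, it follows that $K\subseteq\sigma_{\mathrm{ess}}(\widetilde{S})$. I expect the only genuinely non-formal input to be the single-summand spectral fact used in (ii) (at most one eigenvalue, and no essential spectrum, inside the gap), which rests on the rank-one character of the resolvent difference $S_{n,T}-S_{n,\mathrm{D}}$ and the strict monotonicity of the scalar function \eqref{eq:beta-lambda-1}; by contrast the endpoint case of (iii) is handled cleanly by the accumulation-point argument rather than by infinite multiplicity, and the direct-sum manipulations, though routine, must be phrased with the $\ell^2$-domain so that self-adjointness and the extension property $\widetilde{S}\supset S$ are genuine.
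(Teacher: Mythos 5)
Your construction coincides with the paper's: the extension is $\widetilde{S}=\bigoplus_n S_{n,\lambda_n}$, each summand being the Kre{\u\i}n--von Neumann-type operator $S_n^*\upharpoonright\big(\mathcal{D}(\overline{S_n})\dotplus\ker(S_n^*-\lambda_n\mathbbm{1})\big)$ of Lemmas \ref{lem:Slambdaunitalsa} and \ref{lem:Slambdaev}, and part (i) together with the ``each $\lambda_k$ is an eigenvalue'' half of (ii) runs exactly as in the paper. You do add substance in the two places the paper treats as routine. First, the multiplicity \emph{equality} in (ii): the paper exhibits the orthonormal eigenvectors $v_n$ (giving the lower bound) and calls the rest straightforward, whereas you correctly isolate the needed upper bound, namely that a summand with $\mu_n\neq\lambda$ contributes no eigenvector at $\lambda$, and that within the summand with $\mu_n=\lambda$ the eigenspace is exactly the one-dimensional $\ker(S_n^*-\lambda\mathbbm{1})$ (your injectivity argument for $\overline{S_n}-\lambda\mathbbm{1}$ is correct, since $\lambda$ lies in the gap). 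Second, the endpoint case of (iii): the paper deduces (iii) from (ii) by ``choosing $(\lambda_k)_k\subset K\cap(a,b)$ dense in $K$'', which tacitly assumes $K\cap(a,b)$ is dense in $K$ and fails, e.g., for $K=\{b\}$ with $b$ finite; your approximating values $\mu_{k,j}\to\lambda_k$ combined with the orthonormal (hence weakly null) Weyl sequence spread across distinct summands repairs this cleanly.

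One bolt is loose: to rule out a gap eigenvalue $\lambda\neq\mu_n$ of $S_{n,\mu_n}$ you invoke strict monotonicity of $\lambda\mapsto\beta(\lambda)$ from \eqref{eq:beta-lambda-1}, but the paper verifies this only in the half-line example, not in general, so as written this is an unproven citation. It is true and closable in one line: with $\mu_{w_0}$ the spectral measure of the unit vector $w_0$ for $S_{\mathrm{D}}$ (supported in $\mathbb{R}\setminus(a,b)$ and away from $0$), one has $\beta(\lambda)=\lambda+\lambda^2\langle w_0,(S_{\mathrm{D}}-\lambda\mathbbm{1})^{-1}w_0\rangle$, whence
\begin{equation*}
\beta'(\lambda)\,=\,\int_{\mathbb{R}\setminus(a,b)}\Big(\frac{t}{\,t-\lambda\,}\Big)^{2}\,\ud\mu_{w_0}(t)\,>\,0\qquad\textrm{for }\lambda\in(a,b)\,.
\end{equation*}
Alternatively, and more elementarily, you can bypass $\beta$ altogether: if $S_{n,\mu_n}$ had two distinct eigenvalues $\lambda,\mu_n\in(a,b)$, the span of two corresponding (orthogonal) eigenvectors would be a two-dimensional subspace of $\mathcal{D}(S_{n,\mu_n})$ on which $\|(S_{n,\mu_n}-\frac{a+b}{2}\mathbbm{1})\psi\|<\frac{b-a}{2}\|\psi\|$ (respectively $\langle\psi,S_{n,\mu_n}\psi\rangle<b\|\psi\|^2$ when $a=-\infty$); since $\mathcal{D}(S_{n,\mu_n})/\mathcal{D}(\overline{S_n})$ is one-dimensional by the unital deficiency index, that span meets $\mathcal{D}(\overline{S_n})$ in a non-zero vector, contradicting the gap inequality \eqref{eq:gapcondition}, which survives the operator closure. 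With that repair your proof is complete, and in the multiplicity and endpoint steps it is in fact more careful than the paper's own.
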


   Here $[a,b]$ denotes the closure of $(a,b)$ also in the case when $(a,b)$ is semi-infinite.

   \begin{proof}[Proof of Theorem \ref{thm:infdirsum}]
    Part (i) is obvious, and part (iii) is a direct consequence of part (ii), by choosing the collection $(\lambda_k)_k\subset K\cap(a,b)$ to be a dense of $K$.
    
    Concerning part (ii), consider non-restrictively the \emph{countable} collection $(\lambda_n)_{n\in\mathbb{N}}$ in $(a,b)$: then, one example of the claimed self-adjoint extension is
    \[
     \widetilde{S}\,=\,\bigoplus_{n\in\mathbb{N}}\, S_{n,\lambda_n}\,,
    \]
    where each $S_{n,\lambda_n}$ is the self-adjoint extension of $S_n$ in $\cH_n$ defined by
    \[
      \mathcal{D}(S_{n,\lambda_n})\,:=\,\mathcal{D}(\overline{S_n})\dotplus\ker(S_n^*-\lambda\mathbbm{1})\,,\qquad S_{n,\lambda_n}\,:=\,S_n^*\upharpoonright\mathcal{D}(S_{n,\lambda_n})\,.
    \]
    In other words, $S_{n,\lambda_n}$ is for $S_n$ the analogue of the operator $S_\lambda$ defined in \eqref{eq:defSlambda} with respect to $S$ therein. The fact that $S_{n,\lambda_n}$ extends self-adjointly $S_n$ in $\cH_n$ follows from Lemma \ref{lem:Slambdaunitalsa}. Thus, $\widetilde{S}$ is indeed a self-adjoint extension of $S$ in $\cH$. Furthermore, as seen in the proof of Lemma \ref{lem:Slambdaev}, given a (certainly existing) $v_n\in\ker(S_n^*-\lambda_n\mathbbm{1})\setminus\{0\}\subset\mathcal{D}(S_{n,\lambda_n})\subset\mathcal{D}(S^*)$, one has $\widetilde{S}v_n=S_{n,\lambda_n}v_n=\lambda_n v_n$. This shows that all the $\lambda_n$'s are eigenvalues of $\widetilde{S}$. The conclusion on the multiplicity of such eigenvalues is then straightforward.       
   \end{proof}

   For completeness of reference, infinite orthogonal sums of operators with common gap, each with strictly positive deficiency index, were already analysed in \cite[Theorems 6.1 and 6.2]{Albeverio-Brasche-Neidhardt-JFA1998} and \cite{Brasche-Malamud-Neidhardt-1999-2000} in the context of self-adjoint extensions producing prescribed spectrum in the gap. This was turn based on the the operator-theoretic toolbox devised in \cite[Theorem 2.2]{Brasche-Neidhardt-Weidmann-1992} and \cite[Lemma 2.1 and Proposition 3.1]{Albeverio-Brasche-Neidhardt-JFA1998}, which is \emph{not} explicitly modelled on the extension scheme revisited here in Theorem \ref{thm:VB-representaton-theorem_Tversion2}.

   \section{General case of gapped operators with infinite deficiency index}\label{sec:geninfind}

   Let us finally consider in this Section the general and actual case of interest, namely the creation by self-adjoint extension of arbitrary (point and) essential spectrum within the gap of a densely defined symmetric operator with infinite deficiency index.

     This means that from now on we consider an \emph{infinite-dimensional} Hilbert space $\cH$ and, acting in it,  a densely defined symmetric operator $S$ satisfying the general gap condition \eqref{eq:A1} supplemented with the additional assumption
     \begin{equation}\label{eq:Ass-inf-def-ind}
      d(S)\,=\,\dim\ker S^*\,=\,\infty
     \end{equation}
     that replaces \eqref{eq:Aunital} above. As is evident by the arguments that follow, the case of non-zero finite deficiency index is trivially covered as well.

   As emphasised in Section \ref{sec:intro}, the existence of self-adjoint extensions of $S$ with arbitrary point spectrum in the gap is an already established result: the virtue of the new discussion presented now is to re-obtain and re-understand it by \emph{identifying} the actual operator-theoretic \emph{mechanism} of emergence of new essential spectrum \emph{within the general self-adjoint extension scheme of Theorem \ref{thm:VB-representaton-theorem_Tversion2}}.

   It is instructive for comparison purposes to briefly revisit the classical line of reasoning. It originates from ideas and stimuli by Albeverio, leading to the first complete result \cite[Theorem 2.2]{Brasche-Neidhardt-Weidmann-1992-1993} in the infinite deficiency index case by Brasche, Neidhardt, and Weidmann, and later refinements and reformulations by Albeverio, Brasche, and Neidhardt \cite[Proposition 3.1]{Albeverio-Brasche-Neidhardt-JFA1998}, by Brasche \cite[Corollary 30]{Brasche2001_ContMath2004}, and by Albeverio, Konstantinov, and Koshmanenko \cite[Theorem 3.5]{Albe-Dudkin-Konst-Koshm-2005}.

   The idea is: 
   \begin{enumerate}
    \item[(i)] thanks to the infinity of the deficiency index of $S$, one shows that it is possible to iteratively pick countably many orthonormal eigenvectors $v_1,v_2,v_3,\dots$ of $S^*$ with eigenvalues, respectively, $\lambda_1,\lambda_2,\lambda_3,\dots$ arbitrarily chosen within the gap of $S$, and correspondingly to define
   \begin{equation}\label{eq:defSprime}
     \begin{split}
      \mathcal{D}'\,&:=\,\mathcal{D}(\overline{S})\dotplus\bigoplus_n\mathrm{span}\{v_n\}\,, \\
      S'\,&:=\,S^*\upharpoonright\mathcal{D}'\,;
     \end{split}
   \end{equation}
   \item[(ii)]    by construction, 
   %$\mathcal{D}(S)\subset\mathcal{D}'\subset\mathcal{D}(\overline{S})\dotplus\bigoplus_n\ker(S^*-\lambda_n\mathbbm{1})$, and 
   $S'$ is a closed symmetric extension of $S$; furthermore, $\bigoplus_n\mathrm{span}\{v_n\}$ is a reducing subspace for $S'$ and therefore, with respect to the Hilbert space orthogonal direct sum
   \begin{equation}
    \cH\,=\,\mathrm{span}\{v_1,v_2,v_3,\dots\}^\perp\:\oplus\:\Big(\bigoplus_n\mathrm{span}\{v_n\}\Big)\,,
   \end{equation}
   one may re-write
   \begin{equation}\label{eq:Sprimedirectsum}
    S'\,=\,\widehat{S} \oplus \Big(\bigoplus_n S_n \Big)\,,\quad S_n\,:=\,S^*\upharpoonright\mathrm{span}\{v_n\}\,,
   \end{equation}
   where $\widehat{S}$ is the densely defined and closed symmetric operator in the Hilbert subspace $\mathrm{span}\{v_1,v_2,v_3,\dots\}^\perp$ defined by 
   \begin{equation}\label{eq:domainShat}
    \begin{split}
     \mathcal{D}(\widehat{S})\,&:=\,\mathcal{D}(S')\cap \mathrm{span}\{v_1,v_2,v_3,\dots\}^\perp\,=\,\mathcal{D}(\overline{S})\cap \mathrm{span}\{v_1,v_2,v_3,\dots\}^\perp\,, \\
     \widehat{S} &:=\,S^*\upharpoonright \mathcal{D}(\widehat{S})\,;
    \end{split}
   \end{equation}
   \item[(iii)] last, one can prove that $\widehat{S}$ is gapped, with at least the same gap as $S$, hence it admits a distinguished self-adjoint extension $\widehat{S}_\mathrm{D}$ in $\mathrm{span}\{v_1,v_2,v_3,\dots\}^\perp$, with everywhere defined and bounded inverse, and with spectral gap given by the gap of $\widehat{S}$, thus a spectral gap containing all the $\lambda_n$'s; since each $S_n$ is obviously self-adjoint in $\mathrm{span}\{v_n\}$, the operator
   \begin{equation}\label{eq:ShatD}
    \widehat{S}_\mathrm{D}\oplus \Big(\bigoplus_n S_n \Big)
   \end{equation}
   is self-adjoint in $\cH$, it extends $S$, and it admits all the $\lambda_n$'s as eigenvalues.
   \end{enumerate}
  (We refer to the above-mentioned literature for the somewhat laborious, and occasionally also subtle, justifications of the above scheme.)

   In the classical argument outlined above one exploits the possibility of expressing the closed symmetric extension $S'$ of $S$ in the reduced form \eqref{eq:Sprimedirectsum}: the self-adjoint extension then only takes place in the Hilbert subspace $\mathrm{span}\{v_1,v_2,v_3,\dots\}^\perp$, and there the distinguished extension is taken. This, however, obfuscates how the resulting self-adjoint extension \eqref{eq:ShatD} of $S$ is classified as an extension $S_T$ within the standard extension scheme of Theorem \ref{thm:VB-representaton-theorem_Tversion2}. The reason for that is further elucidated in the following Remark.

   \begin{remark}
    Observe the difference, for concreteness in the prototypical case of unit deficiency index -- so, for the present Remark, let us return to the assumptions \eqref{eq:A1}-\eqref{eq:Aunital}. Following the scheme above, for arbitrary $\lambda$ in the gap of $S$ and for arbitrary eigenvector $v$ of $S^*$ with eigenvalue $\lambda$ one considers the extension $S'$ of $S$ defined by \eqref{eq:defSprime} as the restriction of $S^*$ to
    \begin{equation}\label{eq:oplussum}
     \mathcal{D}(\overline{S})\dotplus\ker(S^*-\lambda\mathbbm{1})\,,
    \end{equation}
    where $\ker(S^*-\lambda\mathbbm{1})=\mathrm{span}\{v\}$.
    This is precisely the operator $S_\lambda$ of Section \ref{sec:preparation-unital}. There, the way to recognise $S_\lambda$ as a member $S_T$ (in fact, $S^{[\beta]}$, in the notation therein) of the family of self-adjoint extensions of $S$ was to operate on the \emph{second} summand of the direct, non-orthogonal sum \eqref{eq:oplussum}, decomposing $v=\overline{S}x+z$ along $\mathrm{ran}\overline{S}$ and $\ker S^*$ respectively: this eventually led to the explicit identification of the Birman operator $T$ acting on $\ker S^*$. In the scheme \eqref{eq:defSprime}-\eqref{eq:ShatD}, instead, one operates on the \emph{first} summand of \eqref{eq:oplussum}, projecting $\mathcal{D}(\overline{S})$ onto $\ker(S^*-\lambda\mathbbm{1})^\perp$, as \eqref{eq:domainShat} clearly shows. This is not natural, though, if one aims at identifying the Birman parameter $T$ that labels the considered self-adjoint extension.    
   \end{remark}

   The discussion laid down so far finally leads us to the announced identification of the self-adjoint extension mechanism, within the extension scheme of Theorem \ref{thm:VB-representaton-theorem_Tversion2}, for the construction of self-adjoint extensions of gapped operators with arbitrary point spectrum in the gap (Theorem \ref{thm:essspecinfdefind} below).

   Observe that no separability restriction will be needed here on the Hilbert space $\cH$, unlike the above mentioned analyses \cite{Brasche-Neidhardt-Weidmann-1992-1993,Albeverio-Brasche-Neidhardt-JFA1998,Brasche-JOT2000,Brasche2001_ContMath2004,Kondej-2001,Albeverio-Brasche-Malamud-Neidhardt-JFA2005,Albe-Dudkin-Konst-Koshm-2005,Konstantinov-2005,Albe-Dudkin-Konst-Koshm-2007,Behrndt-Khrabustovskyi-2021}. 
   %In fact, in the mechanism of emergence of essential spectrum that is identified in the following, separability plays no role at all. 
   It will be not required either that the distinguished self-adjoint extension $S_{\mathrm{D}}$ of $S$ referred to in assumption \eqref{eq:A1} has compact resolvent, as instead crucially needed in \cite{Behrndt-Khrabustovskyi-2021}.

   %In the unital deficiency index case (Section \ref{sec:preparation-unital}) the key operator was the self-adjoint extension $S_\lambda$ of $S$, eventually recognised to have the form $S^{[\lambda]}$ with respect to the general scheme of Theorem \ref{thm:VB-representaton-theorem_Tversion2}, namely labelled by the extension parameter consisting of the operator of multiplication by $\lambda$. The construction that now follows is inspired to that prototypical case.

   The preparation is simple: under assumptions \eqref{eq:A1} and \eqref{eq:Ass-inf-def-ind}, let $(\lambda_n)_{n\in\mathcal{N}}$ be an arbitrary finite or countably infinite collection of (possibly repeated) points inside the gap of $S$, and let $(v_n)_{n\in\mathcal{N}}$ be an orthonormal system in $\mathcal{D}(S^*)$ with
   \begin{equation}\label{eq:Sstarvnlambdavn}
    S^*v_n\,=\,\lambda_n v_n\,, \qquad n\in\mathcal{N}\,.
   \end{equation}
   As mentioned already (see also \cite[Theorem 2.2]{Brasche-Neidhardt-Weidmann-1992-1993} or \cite[Proposition 3.1]{Albeverio-Brasche-Neidhardt-JFA1998}), this choice is always possible by induction, owing to the fact that $\dim\ker(S^*-\lambda_n\mathbbm{1})=\infty$. Then, in analogy to the unital deficiency index case, set 
   \begin{equation}\label{eq:def_un}
    u_n\,:=\,(S_\mathrm{D}-\lambda_n\mathbbm{1})S_\mathrm{D}^{-1}v_n\,, \qquad n\in\mathcal{N}\,,
   \end{equation}
   where $S_\mathrm{D}$ is the distinguished self-adjoint extension of $S$ with everywhere defined and bounded inverse in $\cH$. Clearly, the $u_n$'s are all non-zero, as
   \begin{equation}\label{eq:vnfromun}
    v_n\,=\,S_\mathrm{D}(S_\mathrm{D}-\lambda_n\mathbbm{1})^{-1}u_n\,;
   \end{equation}
   indeed, since $\lambda_n$ belongs to the gap of $S$, $S_\mathrm{D}-\lambda_n\mathbbm{1}$ has everywhere defined and bounded inverse in $\cH$.
   
   \begin{lemma}\label{lem:unvn}
    $(u_n)_{n\in\mathcal{N}}$ is a collection of linearly independent non-zero vectors in $\ker S^*$. The linear map induced by the correspondence $v_n\mapsto u_n$ is a bijection between the Hilbert subspaces
    \begin{equation}\label{eq:defVU}
    V\,:=\,\bigoplus_{n\in\mathcal{N}}\mathrm{span}\{v_n\}\qquad\textrm{and}\qquad U\,:=\,\overline{\,\mathrm{span}\{u_n\,|\,n\in\mathcal{N}\}}\,.
   \end{equation}
   \end{lemma}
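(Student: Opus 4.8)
The plan is to settle the three assertions separately, the decisive one being the linear independence, which I will extract from the gap condition for the distinguished extension $S_\mathrm{D}$. To begin, I would verify that each $u_n$ actually lies in $\ker S^*$. Rewriting $u_n=v_n-\lambda_n S_\mathrm{D}^{-1}v_n$ from \eqref{eq:def_un} and applying $S^*$, the term $S^*v_n$ equals $\lambda_n v_n$ by \eqref{eq:Sstarvnlambdavn}, whereas $S_\mathrm{D}^{-1}v_n\in\mathcal{D}(S_\mathrm{D})\subset\mathcal{D}(S^*)$ gives $S^*S_\mathrm{D}^{-1}v_n=S_\mathrm{D}S_\mathrm{D}^{-1}v_n=v_n$; hence $S^*u_n=\lambda_n v_n-\lambda_n v_n=0$. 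That the $u_n$ are non-zero is already recorded by \eqref{eq:vnfromun}.

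Next comes the core step, the linear independence of $(u_n)_{n\in\mathcal{N}}$, equivalently the injectivity of $\sum_n c_n v_n\mapsto\sum_n c_n u_n$ on finite combinations. Assuming $\sum_{n\in F}c_n u_n=0$ for a finite $F\subset\mathcal{N}$, I set $\xi:=\sum_{n\in F}c_n v_n$; substituting $u_n=v_n-\lambda_n S_\mathrm{D}^{-1}v_n$ rearranges the relation into $\xi=S_\mathrm{D}^{-1}\bigl(\sum_{n\in F}c_n\lambda_n v_n\bigr)$, so that $\xi\in\ran S_\mathrm{D}^{-1}=\mathcal{D}(S_\mathrm{D})$ and $S_\mathrm{D}\xi=\sum_{n\in F}c_n\lambda_n v_n$. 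I then feed $\xi$ into the gap condition \eqref{eq:gapcondition}, which $S_\mathrm{D}$ obeys on $\mathcal{D}(S_\mathrm{D})$ because $\sigma(S_\mathrm{D})\cap(a,b)=\emptyset$ by \eqref{eq:A1}. For a finite gap, orthonormality of the $v_n$ yields $\|(S_\mathrm{D}-\tfrac{a+b}{2}\mathbbm{1})\xi\|^2=\sum_{n\in F}|c_n|^2(\lambda_n-\tfrac{a+b}{2})^2$, while the right-hand side of the gap inequality is $(\tfrac{b-a}{2})^2\sum_{n\in F}|c_n|^2$; since each $\lambda_n\in(a,b)$ forces $(\lambda_n-\tfrac{a+b}{2})^2<(\tfrac{b-a}{2})^2$, the inequality $\|(S_\mathrm{D}-\tfrac{a+b}{2}\mathbbm{1})\xi\|\geqslant\tfrac{b-a}{2}\|\xi\|$ can hold only if every $c_n=0$. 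In the semi-infinite case the same conclusion comes from $\langle\xi,S_\mathrm{D}\xi\rangle=\sum_{n\in F}|c_n|^2\lambda_n<b\|\xi\|^2$ against the requirement $\langle\xi,S_\mathrm{D}\xi\rangle\geqslant b\|\xi\|^2$. Either way $\xi=0$, and orthonormality of $(v_n)$ gives $c_n=0$ for all $n\in F$.

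With injectivity available, the correspondence $v_n\mapsto u_n$ extends by linearity to a bijection of $\mathrm{span}\{v_n\}$ onto $\mathrm{span}\{u_n\}$, and these are dense respectively in $V$ and $U$ of \eqref{eq:defVU}, which is the asserted bijection between the two Hilbert subspaces.

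I expect the linear-independence step to be the genuine obstacle. The point is that $v_n\mapsto u_n$ is not the restriction of one fixed bounded operator, since the $\lambda_n$ vary with $n$, so one cannot merely transport independence along an ambient isomorphism; the gap condition for $S_\mathrm{D}$ supplies exactly the rigidity needed, through the principle that a nonzero finite combination of the gap-eigenvectors $v_n$ cannot sit inside $\mathcal{D}(S_\mathrm{D})$ without contradicting the spectral gap of $S_\mathrm{D}$. A secondary subtlety worth flagging is that, when the gap is semi-infinite and the $\lambda_n$ run off to infinity, this linear bijection need not be bounded nor boundedly invertible; accordingly the bijection is to be understood at the level of the dense generating subspaces, and boundedness is not required for the ensuing identification of the Birman parameter on $U$.
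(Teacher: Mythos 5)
Your proof is correct, and its decisive step runs along a genuinely different route from the paper's. Both arguments coincide at the start: the verification $S^*u_n=0$ via \eqref{eq:Sstarvnlambdavn}, non-vanishing of $u_n$ via \eqref{eq:vnfromun}, and the pivotal identity $\sum_n c_n v_n=S_{\mathrm{D}}^{-1}\big(\sum_n c_n\lambda_n v_n\big)$ extracted from $\sum_n c_n u_n=0$. From there the paper stays inside the Kre{\u\i}n-Vi\v{s}ik-Birman framework: it reads the identity as placing $\sum_n c_n v_n$ in $\mathcal{D}(S_{\mathrm{D}})\cap\bigoplus_n\mathrm{span}\{v_n\}$, distributes the intersection over the orthogonal sum, and annihilates each component using $\mathcal{D}(S_{\mathrm{D}}-\lambda_n\mathbbm{1})\cap\ker(S^*-\lambda_n\mathbbm{1})=\{0\}$, a consequence of the decomposition \eqref{eq:domainadjoint}. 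You instead feed $\xi=\sum_{n\in F}c_n v_n$, now known to satisfy $S_{\mathrm{D}}\xi=\sum_{n\in F}c_n\lambda_n v_n$, into the spectral-gap inequality \eqref{eq:gapcondition} for $S_{\mathrm{D}}$ and obtain a strict numerical contradiction from orthonormality, since $|\lambda_n-\tfrac{a+b}{2}|<\tfrac{b-a}{2}$ (respectively $\lambda_n<b$ in the semi-infinite case). Your appeal to $\sigma(S_{\mathrm{D}})\cap(a,b)=\emptyset$ is legitimate, as the paper itself uses exactly this when asserting that $S_{\mathrm{D}}-\lambda_n\mathbbm{1}$ is boundedly invertible. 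What your route buys is a quantitative, self-contained argument that in particular bypasses the step where the paper rewrites $\mathcal{D}(S_{\mathrm{D}})\cap\bigoplus_n\mathrm{span}\{v_n\}$ as $\bigoplus_n\big(\mathcal{D}(S_{\mathrm{D}})\cap\mathrm{span}\{v_n\}\big)$ -- not a general set-theoretic identity, and arguably the least transparent point of the paper's proof. What the paper's version buys is twofold: conceptual coherence with the decomposition \eqref{eq:domainadjoint} driving the whole extension scheme, and the fact that it is phrased directly for series converging in $\mathcal{D}(S^*)$, i.e., injectivity at the level of the closed subspaces rather than only on finite combinations; note, though, that your gap argument extends verbatim to such series, since graph-norm convergence yields $S_{\mathrm{D}}\xi=\sum_n c_n\lambda_n v_n$ termwise and the orthonormality computation is unaffected. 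Finally, your reading of the bijection in \eqref{eq:defVU} as a (generally unbounded) linear bijection of the dense generating spans is a fair and honest rendering of the statement -- the paper disposes of this point with one line (``\eqref{eq:def_un} and \eqref{eq:vnfromun} invert each other'') -- and it suffices for the lemma's only subsequent use, namely transporting $\widetilde{T}$ to $T$ through matrix elements.
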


   \begin{proof} From \eqref{eq:Sstarvnlambdavn} and \eqref{eq:def_un} one has
   \[
    \begin{split}
      S^* u_n\,=\,S^*(S_\mathrm{D}-\lambda_n\mathbbm{1})S_\mathrm{D}^{-1}v_n\,&=\,S^*(v_n-\lambda_n S_\mathrm{D}^{-1}v_n ) \\
      &=\,S^*v_n-\lambda_n v_n \,=\,0\,,
    \end{split}
   \]
   that is, $u_n\in\ker S^*$. As for the linear independence of the $u_n$'s, observe first that for any selection of complex coefficients $(c_n)_{n\in\mathcal{N}}$ for which the following series converge as elements of $\mathcal{D}(S^*)$, one has 
   \[
    \begin{split}
     \sum_{n\in\mathcal{N}}c_n u_n\,&=\,\sum_{n\in\mathcal{N}}c_n (S_\mathrm{D}-\lambda_n\mathbbm{1})S_\mathrm{D}^{-1}v_n \\
     &=\,\sum_{n\in\mathcal{N}}c_n v_n - \sum_{n\in\mathcal{N}}c_n \lambda_n S_\mathrm{D}^{-1} v_n\,.
    \end{split}
   \]
   If $\sum_{n\in\mathcal{N}}c_n u_n=0$, then
   \[\tag{*}\label{eq:series}
    \sum_{n\in\mathcal{N}}c_n v_n\,=\,S_\mathrm{D}^{-1}\Big(\sum_{n\in\mathcal{N}}c_n \lambda_n v_n\Big)\,.
   \]
   The latter is an identity in $\mathcal{D}(S_\mathrm{D})\cap\big(\bigoplus_{n\in\mathcal{N}}\mathrm{span}\{v_n\}\big)$. In view of the orthonormality of the $v_n$'s one can re-express such subspace as 
   \[
    \begin{split}
     \mathcal{D}(S_\mathrm{D})\cap\Big(\bigoplus_{n\in\mathcal{N}}\mathrm{span}\{v_n\}\Big)\,&=\,\bigoplus_{n\in\mathcal{N}}\big( \mathcal{D}(S_\mathrm{D})\cap\mathrm{span}\{v_n\}\big) \\
     &\subset\,\bigoplus_{n\in\mathcal{N}}\big( \mathcal{D}(S_\mathrm{D}-\lambda_n\mathbbm{1})\cap\ker(S^*-\lambda_n\mathbbm{1})\big)\,.
    \end{split}
   \]
   However, $\mathcal{D}(S_\mathrm{D}-\lambda_n\mathbbm{1})\cap\ker(S^*-\lambda_n\mathbbm{1})=\{0\}$ for each $n$, owing to the direct decomposition $\mathcal{D}(S^*)=\mathcal{D}(S_\mathrm{D}-\lambda_n\mathbbm{1})\dotplus\ker(S^*-\lambda_n\mathbbm{1})$ (see \eqref{eq:domainadjoint} above). Therefore, each side in \eqref{eq:series} must vanish. In particular, $\sum_{n\in\mathcal{N}}c_n v_n=0$. As the $v_n$'s constitute an orthonormal system, necessarily $c_n=0$ $\forall n\in\mathcal{N}$. This establishes the linear independence of the $u_n$'s. The last part of the thesis is now obvious, as \eqref{eq:def_un} and \eqref{eq:vnfromun} invert each other.    
   \end{proof}

   Next, let us complete the general preparation by defining $\widetilde{T}$ as the \emph{self-adjoint} operator in the Hilbert subspace $V$ obtained by linear extension and operator closure from the matrix elements
   \begin{equation}
    \langle v_m,\widetilde{T} v_n\rangle\,:=\,\lambda_n\langle v_m,v_n\rangle-\lambda_m\lambda_n\langle v_m, S_\mathrm{D}^{-1} v_n\rangle
   \end{equation}
   with respect to the orthonormal basis $(v_n)_{n\in\mathcal{N}}$ of $V$, and by defining $T$ as the corresponding self-adjoint operator in the Hilbert subspace $U\subset\ker S^*$ via the bijection $V\cong U$ considered in Lemma \ref{lem:unvn}. In practice,
   \[
    \widetilde{T}\,=\,\bigoplus_{n\in\mathcal{N}}\lambda_n P_n + {\sum_{n,m\in\mathcal{N}}}^{\!\!\!\!\oplus}\:\lambda_m\lambda_n\, P_m S_\mathrm{D}^{-1} P_n
   \]
   in the sense of operator direct sums, where the $P_n$'s are the orthogonal projections, respectively, onto the $\mathrm{span}\{v_n\}$'s, and correspondingly
   \[
    \langle u_m, T u_n\rangle\,=\,\langle v_m,\widetilde{T} v_m\rangle\,,
   \]
   or, with any of the following equivalent expressions,
   \begin{equation}\label{eq:Tmatrixelements}
   \begin{split}
    \langle u_m, T u_n\rangle\,&=\,\lambda_n\langle v_m,v_n\rangle-\lambda_m\lambda_n\langle v_m, S_\mathrm{D}^{-1} v_n\rangle \\
    &=\,\lambda_n \langle (S_\mathrm{D}-\lambda_m\mathbbm{1})S_\mathrm{D}^{-1} v_m,v_n\rangle \\
    &=\,\lambda_n \langle u_m, S_\mathrm{D}(S_\mathrm{D}-\lambda_n\mathbbm{1})^{-1}u_n \rangle \\
    &=\,\lambda_n \langle u_m,v_n\rangle\,.
   \end{split}   
   \end{equation}

   \begin{remark}
    The present definition of $T$ reproduces formula \eqref{eq:beta-lambda-1} of the special case of unital deficiency index (Proposition \ref{lem:SlambdaSbeta}). Indeed, for that case \eqref{eq:Tmatrixelements} reads
    \[
     \langle u, T u \rangle \,=\,\lambda \langle u, S_\mathrm{D}(S_\mathrm{D}-\lambda\mathbbm{1})^{-1}u \rangle 
    \]
    for $\lambda$ in the gap of $S$ and $u=(S_\mathrm{D}-\lambda\mathbbm{1})S_\mathrm{D}^{-1}v\in\ker S^*$ for given \emph{normalised} $v\in\ker(S^*-\lambda\mathbbm{1})$; since $T$ is here the multiplication by $\beta$, one then has 
    \[
     \beta \,=\,\frac{\langle u, T u\rangle}{\:\|u\|^2}\,=\,\lambda\,\frac{\,\langle u, S_\mathrm{D}(S_\mathrm{D}-\lambda\mathbbm{1})^{-1}u \rangle\,}{\:\|u\|^2}\,,
    \]
    consistently with \eqref{eq:beta-lambda-1}.    
   \end{remark}

   \begin{theorem}\label{thm:essspecinfdefind}
    Under the assumptions \eqref{eq:A1} and \eqref{eq:Ass-inf-def-ind}, and for an arbitrary finite or countably infinite collection $(\lambda_n)_{n\in\mathcal{N}}$ of points in the spectral gap of $S$, the self-adjoint extension $S_T$ of $S$ labelled, according to the general extension scheme \eqref{eq:ST-2}, by the Birman parameter $T$ defined above has all the $\lambda_n$'s as eigenvalues. If $\lambda$ is any of such values, then its multiplicity as an eigenvalue is no less than the multiplicity of the $\lambda_n$'s being equal to $\lambda$.    
   \end{theorem}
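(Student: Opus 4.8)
The plan is to show that each prescribed $v_n$ is itself an eigenvector of $S_T$ with eigenvalue $\lambda_n$. Since $v_n\in\mathcal{D}(S^*)$ and $S^*v_n=\lambda_n v_n$ by \eqref{eq:Sstarvnlambdavn}, and since $S_T=S^*\upharpoonright\mathcal{D}(S_T)$, the whole matter reduces to checking the single membership $v_n\in\mathcal{D}(S_T)$: once this is granted one immediately obtains $S_T v_n=S^*v_n=\lambda_n v_n$. Thus the task is purely to exhibit, for each fixed $n\in\mathcal{N}$, a decomposition of $v_n$ of the form prescribed by \eqref{eq:ST-2}, namely $v_n=f+S_\mathrm{D}^{-1}(Tv+w)+v$ with $f\in\mathcal{D}(\overline{S})$, $v\in\mathcal{D}(T)$ and $w\in\ker S^*\cap\mathcal{D}(T)^\perp$.

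The first step is to compute the action of $T$ on the natural candidate $v:=u_n$. Since the $u_n$'s are exactly the images of the core vectors $v_n$ under the bijection $V\cong U$ of Lemma \ref{lem:unvn}, one has $u_n\in\mathcal{D}(T)$, and the last line of \eqref{eq:Tmatrixelements} gives $\langle u_m,Tu_n\rangle=\lambda_n\langle u_m,v_n\rangle$ for every $m\in\mathcal{N}$. Because $Tu_n\in U=\overline{\mathrm{span}\{u_m\}}$ while the $u_m$'s span a dense subspace of $U$, this identity pins down $Tu_n$ completely: writing $P_U$ for the orthogonal projection of $\cH$ onto $U$, one reads off $Tu_n=\lambda_n P_U v_n$.

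The second step is the decomposition itself, which mirrors the unital computation of Proposition \ref{lem:SlambdaSbeta} with projections in place of scalars. Set $U':=\ker S^*\ominus U=\ker S^*\cap\mathcal{D}(T)^\perp$ (the last equality because $\mathcal{D}(T)$ is dense in $U$), and choose $v:=u_n\in\mathcal{D}(T)$ together with $w:=\lambda_n P_{U'}v_n\in U'$, which is an admissible value of $w$. From the definition \eqref{eq:def_un} one has $v_n=u_n+\lambda_n S_\mathrm{D}^{-1}v_n$, so that $f:=v_n-S_\mathrm{D}^{-1}(Tu_n+w)-u_n$ equals $\lambda_n S_\mathrm{D}^{-1}v_n-S_\mathrm{D}^{-1}(Tu_n+w)$. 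Now $Tu_n+w=\lambda_n P_U v_n+\lambda_n P_{U'}v_n=\lambda_n P_{\ker S^*}v_n$, whence $f=\lambda_n S_\mathrm{D}^{-1}\big(v_n-P_{\ker S^*}v_n\big)=\lambda_n S_\mathrm{D}^{-1}P_{\ran\overline{S}}\,v_n$. Since under assumption \eqref{eq:A1} one has $\ran\,\overline{S}=(\ker S^*)^\perp$ and consequently $\mathcal{D}(\overline{S})=S_\mathrm{D}^{-1}(\ran\,\overline{S})$, this places $f$ in $\mathcal{D}(\overline{S})$. Hence $v_n=f+S_\mathrm{D}^{-1}(Tu_n+w)+u_n\in\mathcal{D}(S_T)$, as desired.

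Finally, the multiplicity statement follows by orthonormality: if $\lambda$ coincides with $\lambda_n$ precisely for the indices $n$ in a subset $\mathcal{N}_\lambda\subset\mathcal{N}$, the corresponding $v_n$ are pairwise orthogonal, hence linearly independent, eigenvectors of $S_T$ for the eigenvalue $\lambda$, so that $\dim\ker(S_T-\lambda\mathbbm{1})\geqslant\#\mathcal{N}_\lambda$; this is exactly the asserted lower bound (equality need not hold, as $S_T$ could carry further eigenvectors unrelated to the chosen $v_n$). The step I expect to require the most care is the first one, the identification $Tu_n=\lambda_n P_U v_n$: it hinges on the fact that the self-adjoint closure defining $\widetilde{T}$ (and hence $T$) retains the original core vectors $v_n$ (resp.\ $u_n$) in its domain with the stated matrix elements, which is what legitimises reading $Tu_n$ directly off \eqref{eq:Tmatrixelements}; the remaining algebra is a faithful transcription of the unital case into the projection calculus on $\ker S^*=U\oplus U'$.
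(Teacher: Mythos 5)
Your proposal is correct and takes essentially the same route as the paper: you exhibit the identical decomposition $v_n=f+S_{\mathrm{D}}^{-1}(Tu_n+w)+u_n$ (your $f=\lambda_n S_{\mathrm{D}}^{-1}P_{\ran\overline{S}}\,v_n$ and $w=\lambda_n P_{U'}v_n$ coincide with the paper's $f_n=\lambda_n x_n$ and $w_n=\lambda_n P_W z_n$ under $v_n=\overline{S}x_n+z_n$), the only cosmetic difference being that you first extract the closed form $Tu_n=\lambda_n P_U v_n$ from \eqref{eq:Tmatrixelements} and then verify the membership directly, whereas the paper posits the same candidate element and checks that the residual $y_n=Tu_n+w_n-\lambda_n z_n\in\ker S^*$ vanishes along $U$ and $W$ -- the same computation in a different order. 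The multiplicity argument via orthonormality of the $v_n$'s likewise matches the paper's.
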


   \begin{proof}
    Let $(v_n)_{n\in\mathcal{N}}$ be the orthonormal system selected in $\mathcal{D}(S^*)$ for the construction of $(u_n)_{n\in\mathcal{N}}$, $U$, $V$, and $T$ from \eqref{eq:def_un}, \eqref{eq:defVU}, and \eqref{eq:Tmatrixelements}. Decompose
    \[
     v_n\,=\,\overline{S}x_n+z_n
    \]
%$v_n=\overline{S}x_n+z_n$ 
    for $x_n\in\mathcal{D}(\overline{S})$ and $z_n\in\ker S^*$ uniquely determined via the orthogonal direct sum $\cH=\,\ran\,\overline{S}\oplus\ker S^*$. Next, define
    \[
     \begin{split}
      f_n\,&:=\,\lambda_n x_n\,\in\,\mathcal{D}(\overline{S})\,, \\
      W\,&:=\,\ker S^*\ominus U\,, \quad\mathrm{i.e.,}\quad \ker S^*=U\oplus W\,, \\
      w_n\,&:=\,\lambda_n P_W z_n \,\in\,W\,,
     \end{split}
    \]
   where $P_W$ denotes the orthogonal projection onto $W$. With these definitions, consider the element $f_n+S_{\mathrm{D}}^{-1}(Tu_n+w_n)+u_n\in\mathcal{D}(S_T)$, where $S_T$ is the self-adjoint extension of $S$ parametrised by the present Birman operator $T$, according to the general classification \eqref{eq:ST-2} of Theorem \ref{thm:VB-representaton-theorem_Tversion2}. One has 
   \[
    \begin{split}
     f_n+& S_{\mathrm{D}}^{-1}(Tu_n+w_n)+u_n \\
     &=\,\lambda_n x_n + S_{\mathrm{D}}^{-1}(Tu_n+w_n)+ (S_\mathrm{D}-\lambda_n\mathbbm{1})S_\mathrm{D}^{-1}v_n \\
     &=\,\lambda_n S_{\mathrm{D}}^{-1}(\overline{S}x_n)+ S_{\mathrm{D}}^{-1}(Tu_n+w_n)+v_n-\lambda_nS_{\mathrm{D}}^{-1} v_n \\
     &=\,-\lambda_n S_{\mathrm{D}}^{-1}z_n+ S_{\mathrm{D}}^{-1}(Tu_n+w_n)+v_n  \\
     &=\,S_{\mathrm{D}}^{-1}y_n+v_n\,,
    \end{split}
   \]
   where 
   \[
    y_n\,:=\,Tu_n+w_n-\lambda_n z_n\,\in\,\ker S^*\,.
   \]
   By inspection, $y_n=0$. Indeed, along $U$ one has, for any $m\in\mathcal{N}$, 
   \[
    \begin{split}
     \langle u_m,x_n\rangle\,&=\,\langle u_m, Tu_n\rangle-\lambda_n\langle u_m,z_n\rangle \\
     &=\,\langle u_m, Tu_n\rangle-\lambda_n\langle u_m,v_n\rangle\,=\,0
    \end{split}
   \]
   (having used that $u_m\perp w_n$ in the first identity, $z_n=v_n-\overline{S}x_n$ and $u_n\perp \overline{S}x_n$ in the second identity, and the last of \eqref{eq:Tmatrixelements} in the final step), whereas along $W$ one has
   \[
    P_W y_n\,=\,w_n-\lambda_n P_W z_n\,=\,0\,.
   \]
   Thus,
   \[
    v_n\,=\,f_n+ S_{\mathrm{D}}^{-1}(Tu_n+w_n)+u_n\,\in\,\mathcal{D}(S_T)\,,
   \]
   and $S_Tv_n=S^*v_n=\lambda_n v_n$, which proves that indeed the self-adjoint extension $S_T$ admits all the $\lambda_n$'s as eigenvalues. The statement on the multiplicity is then obvious, since the eigenvectors $v_n$'s constitute an orthonormal system.  
   \end{proof}

   \begin{remark}
    In the unital deficiency index case of Sect.~\ref{sec:preparation-unital} the present identification $y_n=Tu_n+w_n-\lambda_n z_n$ reads $y=Tu-\lambda z=\beta u-\lambda z$ in the notation therein, since $\ker S^*=\mathrm{span}\{u\}=U$ and hence $W=\ker S^*\ominus U=\{0\}$. As observed, $\beta u=\lambda z$, whence $y=0$.   
   \end{remark}

   The above construction clearly covers also the case when the gapped operator $S$ has \emph{finite} deficiency index and one produces a self-adjoint extension with finitely many given eigenvalues within the gap. As mentioned, this special case was the original result of Kre{\u\i}n \cite[Theorem 23]{Krein-1947}, but it is re-obtained here in a considerably more economic way then Kre{\u\i}n's ingenious but involved proof. This cheapness of course relies on a highly sophisticated toolbox, namely Theorem \ref{thm:VB-representaton-theorem_Tversion2} providing the general classification of all self-adjoint extensions of $S$, which was unavailable at the time of \cite{Krein-1947} and was only completed by Vi\v{s}ik \cite{Vishik-1952} and Birman \cite{Birman-1956} in the following years.

   For the same reason, Theorem \ref{thm:essspecinfdefind} above provides a much more direct construction of the desired self-adjoint extension also in the general case of \emph{infinite} deficiency index, as compared to the classical reasoning outline at the beginning of this Section.

   \begin{corollary}
    Let  $S$ be a densely defined, symmetric, gapped operator on an infinite-dimensional Hilbert space $\cH$, and assume that $S$ has infinite deficiency index. For any arbitrary closed subset $K$ of the closure of the gap of $S$ there is a self-adjoint extension of $S$ whose essential spectrum contains $K$.
   \end{corollary}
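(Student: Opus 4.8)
The plan is to deduce the statement directly from Theorem~\ref{thm:essspecinfdefind}, the only additional ingredient being a density argument that upgrades a suitably chosen family of eigenvalues to a portion of \emph{essential} spectrum. Let $(a,b)$ denote the gap of $S$, so that $K\subseteq[a,b]$ (with the convention recalled above for the semi-infinite case), and recall that the open gap $(a,b)$ is dense in its closure $[a,b]$. Since $K\subseteq\mathbb{R}$ is separable, I would fix once and for all a countable dense subset $\{d_j\}_{j\in\mathbb{N}}$ of $K$.

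First I would approximate each target point from strictly inside the gap. For every $j$, since $d_j\in[a,b]=\overline{(a,b)}$, I can select a sequence $(\mu^{(j)}_i)_{i\in\mathbb{N}}$ of \emph{pairwise distinct} points of $(a,b)$ with $\mu^{(j)}_i\xrightarrow{\,i\to\infty\,}d_j$; this holds for interior points of $K$ trivially, and for the gap endpoints by approaching them from within $(a,b)$. Enumerating the doubly-indexed family $\{\mu^{(j)}_i\}_{i,j}$ as a single sequence $(\lambda_n)_{n\in\mathbb{N}}\subset(a,b)$, I then invoke Theorem~\ref{thm:essspecinfdefind}: under the standing assumptions \eqref{eq:A1} and \eqref{eq:Ass-inf-def-ind} it furnishes a self-adjoint extension $S_T$ of $S$, explicitly labelled by the Birman parameter $T$ built from the $\lambda_n$'s, having every $\lambda_n$ as an eigenvalue.

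It then remains to pass from eigenvalues to essential spectrum, which is where the genuine (if mild) content lies. Fixing $j$, the points $\mu^{(j)}_i$ are pairwise distinct eigenvalues of $S_T$ accumulating at $d_j$; hence $d_j$ is a non-isolated point of $\sigma(S_T)$, and since a non-isolated point of the spectrum cannot lie in the discrete spectrum $\sigma_{\mathrm{disc}}(S_T)$, it must belong to $\sigma_{\mathrm{ess}}(S_T)=\sigma(S_T)\setminus\sigma_{\mathrm{disc}}(S_T)$. Thus $\{d_j\}_{j\in\mathbb{N}}\subseteq\sigma_{\mathrm{ess}}(S_T)$, and as $\sigma_{\mathrm{ess}}(S_T)$ is closed while $\{d_j\}$ is dense in the closed set $K$, I conclude $K=\overline{\{d_j\}}\subseteq\sigma_{\mathrm{ess}}(S_T)$.

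The main obstacle I expect is precisely the book-keeping ensuring the accumulation is genuine: one must guarantee that each $d_j$ is approached by \emph{infinitely many distinct} eigenvalues, so that it is forced into $\sigma_{\mathrm{ess}}(S_T)$ rather than surviving as an isolated eigenvalue of finite multiplicity, and one must handle the gap endpoints, which lie outside $(a,b)$ and are therefore not directly admissible in Theorem~\ref{thm:essspecinfdefind}, by approximating them from the interior. Both are resolved by the construction above, and no compactness of the resolvent nor separability of $\cH$ is needed beyond the separability of $K\subseteq\mathbb{R}$.
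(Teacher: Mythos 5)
Your proposal is correct, and its core is the same as the paper's proof: invoke Theorem~\ref{thm:essspecinfdefind} for a countable collection of points in the open gap $(a,b)$, then conclude by closedness of the essential spectrum. The difference lies in how the collection $(\lambda_n)_n$ is chosen, and there your version is in fact more robust than the paper's one-line argument. The paper takes $(\lambda_n)_n$ to be a dense subset of $K\cap(a,b)$; read literally, this misses points of $K$ at an endpoint of the gap whenever $\overline{K\cap(a,b)}\subsetneq K$ (e.g.\ $K=\{b\}$, or $K=\{a\}\cup[c,d]$ with $a<c<d<b$), and by itself it does not force an \emph{isolated} point of $K$ into $\sigma_{\mathrm{ess}}(S_T)$: for that the paper implicitly relies on the allowance, built into Theorem~\ref{thm:essspecinfdefind}, of infinitely repeated $\lambda_n$'s producing an eigenvalue of infinite multiplicity (as the remark following the corollary, about isolated eigenvalues of infinite multiplicity, indicates). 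Your device — approximating each point $d_j$ of a countable dense subset of $K$ by pairwise distinct points of $(a,b)$ — handles both issues at once: every $d_j$, whether interior or an endpoint, isolated in $K$ or not, becomes a non-isolated point of $\sigma(S_T)$, hence cannot lie in $\sigma_{\mathrm{disc}}(S_T)$ and must belong to $\sigma_{\mathrm{ess}}(S_T)$, with no appeal to infinite multiplicities. The remaining steps (closedness of the spectrum, the characterisation of the discrete spectrum as isolated eigenvalues of finite multiplicity, closedness of the essential spectrum, and the final density argument $K=\overline{\{d_j\}}\subseteq\sigma_{\mathrm{ess}}(S_T)$) are all correctly deployed, so your write-up is a complete proof — indeed one that quietly patches the small lacunae in the paper's terse version.
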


   \begin{proof}
    It is clearly non-restrictive to assume the gap $(a,b)$ of $S$ to include $0$. Conditions \eqref{eq:A1} and \eqref{eq:Ass-inf-def-ind} are thus satisfied and it is possible to construct the above self-adjoint extension $S_T$ of $S$ relative to a collection $(\lambda_n)_n$ of points that form a dense of $K\cap(a,b)$. Then $S_T$ admits all such $\lambda_n$'s as eigenvalues, implying that $K\subset\sigma_{\mathrm{ess}}(S_T)$.    
   \end{proof}

   It is worth remarking, in view of the construction above, that the arbitrary portion $K$ of new essential spectrum for $S_T$ inside the closure of the gap $(a,b)$ can be of either type: points of $\sigma(S_T)$ that are accumulation points of actual eigenvalues, as well as isolated eigenvalues of $S_T$ with infinite multiplicity.

   As mentioned in the background discussion (Section \ref{sec:intro}), a very recent instance of emergence, by self-adjoint extension, of one single isolated point of essential spectrum within the gap is the three-dimensional Dirac operator with critical shell interaction supported on the union of finitely many disjoint spheres \cite{Benhellal-Pankrashkin-2022}. That is precisely the value $-m\mu/\varepsilon$ for given mass, electrostatic shell, and Lorenz shell parameters, respectively, $m\geqslant 0$ and $\varepsilon,\mu\in\mathbb{R}$. In \cite{Benhellal-Pankrashkin-2022} such a value of essential spectrum is identified by indirect means that do not elucidate its emergence within the general self-adjoint extension scheme. It would be of valuable interest to investigate whether the self-adjoint Dirac operator at criticality, with singular perturbation on spherical shells, is indeed of the form $S_T$ of Theorem \ref{thm:essspecinfdefind}.

% 
% \bibliographystyle{siam}
% \bibliography{bib_ALE}

 \def\cprime{$'$}

\end{document}